\newtheorem{theorem}{Theorem}[section]
\newtheorem{lemma}[theorem]{Lemma}
\newtheorem{proposition}[theorem]{Proposition}
\newtheorem{corollary}[theorem]{Corollary}
\theoremstyle{definition}
\newtheorem*{ack}{Acknowledgements}
\newtheorem{remark}[theorem]{Remark}
\newtheorem{definition}[theorem]{Definition}
\numberwithin{equation}{section} \numberwithin{figure}{section}
\DeclareMathOperator{\Pic}{Pic}
\DeclareMathOperator{\Spec}{Spec}
\DeclareMathOperator{\an}{an}
\newcommand{\Qbar}{\overline{\QQ}}
\newcommand\ZZ{\mathbb{Z}}
\newcommand\QQ{\mathbb{Q}}
\newcommand\CC{\mathbb{C}}
\newcommand\OO{\mathcal{O}}
\definecolor{orange}{rgb}{1,0.5,0}
\author{Ariyan Javanpeykar}
\address{Ariyan Javanpeykar \\
	Institut f\"{u}r Mathematik\\
	Johannes Gutenberg-Universit\"{a}t Mainz\\
	Staudingerweg 9, 55099 Mainz\\
	Germany.}
\email{peykar@uni-mainz.de}
\author{Siddharth Mathur}
\address{Siddharth Mathur, CNRS, Universit\'e Paris-Saclay, Laboratoire de math\'ematiques d’Orsay, F-91405, Orsay, France\\}
\urladdr{https://sites.google.com/view/sidmathur/home} 
\subjclass[2020]
{14G99 
	(14D23,
	11G35,  
	14G05,  
	32Q45)} 
\keywords{Integral points,  moduli stacks, finitely generated fields, abelian varieties}
\title[Smooth hypersurfaces in abelian varieties]{Smooth hypersurfaces in abelian varieties over arithmetic rings}
\begin{document}

	\begin{abstract}   Let $A$ be an abelian scheme of   dimension at least four over a $\mathbb{Z}$-finitely generated integral domain $R$  of characteristic zero, and let $L$ be an ample line bundle on $A$.   We prove that the set of smooth hypersurfaces $D$ in $A$ representing $L$ is finite by showing that the moduli stack of such hypersurfaces has only finitely many $R$-points. We accomplish this by using level structures to interpolate finiteness results between this moduli stack and the stack of canonically polarized varieties. 
		\end{abstract}

	\maketitle

	\thispagestyle{empty}

	\section{Introduction}  
	Finiteness results are among the most celebrated achievements in arithmetic geometry. Consider the following striking examples:
	\begin{description} 
\item[Faltings] The set of isomorphism classes of  abelian schemes of dimension $g \geq 1$ over a fixed number ring is finite \cite{Faltings2}. In fact, the moduli stack of $g$-dimensional principally polarized abelian schemes $\mathcal{A}_g$ has only finitely many $\mathcal{O}_{K,S}$-points (up to isomorphism) for any number field $K$ and any fixed finite set of finite places $S$.
\item[Lawrence-Sawin] The set of isomorphism classes of smooth hypersurfaces $H$ in an abelian scheme $A$ of dimension $g \geq 4$ over a given number ring corresponding to a fixed ample class in $\mathrm{Pic}(A)$  is finite  \cite{lawrence2020shafarevich}. Once again, there is a stack, we call it $\mathcal{AH}_{g,d}$, which overparametrizes this set and which has only finitely many $\mathcal{O}_{K,S}$-points, up to isomorphism.
\end{description}
	
	 It is natural to ask if these results hold for larger fields and, in fact, Lang intimated that such statements over number fields should persist over finitely generated fields over $\mathbb{Q}$ (see \cite[pg. 202]{Lang2}). In fact, this \emph{persistence} is supported by a series of conjectures, due to Lang, which link arithmetic and complex geometry \cite{Lang2, JBook}. However, it is difficult to show that an arbitrary moduli stack admitting only finitely many $\mathcal{O}_{K,S}$-points should only admit finitely many $R$-points for any normal $\mathbb{Z}$-finitely generated domain of characteristic zero. We will call such rings $R$ \emph{arithmetic rings}; they may be viewed as higher dimensional analogues of number rings $\mathcal{O}_{K,S}$. Indeed, the number rings $\mathcal{O}_{K,S}$ are precisely the arithmetic rings of dimension one. Note that Faltings' result holds for arithmetic rings. The goal of this paper is to extend the result of Lawrence-Sawin to this setting.
	
	\begin{theorem}[Main Result]\label{thm1} Let $R$ be an arithmetic ring with fraction field $K$, and let $A$ be an abelian scheme of relative dimension at least four over $R$. Let $\mathcal{L} \in \mathrm{Pic}(A_K)$ be  ample. Then the set of $R$-smooth hypersurfaces $X\subset A$ such that $\mathcal{O}_{A_K}(X_K)$ is isomorphic to  $\mathcal{L}$ is finite. 
	\end{theorem}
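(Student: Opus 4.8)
\emph{Proof strategy.} The plan is to deduce Theorem~\ref{thm1} from the assertion that the moduli stack $\mathcal{AH}_{g,d}$ of smooth hypersurfaces in $g$-dimensional abelian schemes (with the invariants $g \ge 4$, and $d$ recording the polarization, as above) is \emph{arithmetically hyperbolic over $\CC$}: that is, $\mathcal{AH}_{g,d}(B)$ has finitely many isomorphism classes for every $\ZZ$-finitely generated subring $B \subset \CC$. Since any arithmetic ring embeds into $\CC$, this applies to our $R$. To pass from it to the theorem, first I would fix an embedding $K \hookrightarrow \CC$ and consider the map $X \mapsto [(A,X)] \in \mathcal{AH}_{g,d}(R)$, whose target is then finite; it remains to bound its fibres. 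If $(A,X_1)$ and $(A,X_2)$ define the same point of $\mathcal{AH}_{g,d}(R)$, then $X_2 = \psi(X_1)$ for some $\psi \in \Aut(A/R)$ fixing the polarization class, and such $\psi$ form an extension of the finite group of polarized automorphisms of $A$ by the finite group scheme $\ker(\phi_{\mathcal L})$ of translations fixing $\mathcal L$; since the requirement $\mathcal O_{A_K}(X_{i,K}) \cong \mathcal L$ forces the translational part of $\psi$ to lie in $\ker(\phi_{\mathcal L})$, only finitely many of the hypersurfaces $\psi(X_1)$ again represent $\mathcal L$ over $K$. (One uses here that $A(R)$ is finitely generated, by the Lang--N\'eron theorem.)

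The geometric heart of the argument --- the bridge to canonically polarized varieties promised in the abstract --- is that, because $g \ge 4$, a smooth hypersurface $X \subset A$ has $\dim X = g-1 \ge 3$, so the Lefschetz hyperplane theorem for the ample divisor $X$ gives isomorphisms $\pi_1(X) \cong \pi_1(A)$ and $\HH^1(X) \cong \HH^1(A)$. Hence the Albanese morphism identifies $\Alb(X) \cong A$ canonically, the class of $X$ in $\Pic(A)$ is recovered from $X$, and the embedding $X \hookrightarrow A$ is pinned down up to translation; moreover adjunction gives $\omega_X \cong \mathcal O_A(X)|_X$, which is ample, so $X$ is canonically polarized. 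Encoding $X$ with this structure yields a morphism of stacks $\mathcal{AH}_{g,d} \to \mathcal M_h$ into the moduli stack of smooth canonically polarized varieties with the Hilbert polynomial $h$ determined by $(g,d)$, and the preceding remarks show this morphism is quasi-finite: a point of $\mathcal{AH}_{g,d}$ is reconstructed from its image by forming $\Alb$, with only the finite ambiguity of an Albanese embedding, and automorphism groups match by functoriality of $\Alb$.

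With this in place I would combine two finiteness inputs of completely different provenance. Over $\overline{\QQ}$, the stack $\mathcal{AH}_{g,d}$ is arithmetically hyperbolic --- this is exactly the theorem of Lawrence--Sawin, supplying finiteness of $\mathcal O_{K,S}$-points for all number fields and all finite $S$. Over $\CC$, the stack $\mathcal M_h$ of smooth canonically polarized varieties with fixed Hilbert polynomial is \emph{bounded}, by the Shafarevich-type boundedness theorems for canonically polarized varieties (Kov\'acs--Lieblich, building on Viehweg--Zuo), and boundedness transfers to $\mathcal{AH}_{g,d}$ along the quasi-finite morphism above. By the persistence principle for arithmetic hyperbolicity --- arithmetic hyperbolicity over $\overline{\QQ}$ together with boundedness over $\CC$ implies arithmetic hyperbolicity over $\CC$ --- one concludes that $\mathcal{AH}_{g,d}$ is arithmetically hyperbolic over $\CC$, as wanted. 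To make all of this literally applicable one should replace the stacks by schemes: I would introduce level-$m$ structures compatibly on $\mathcal{AH}_{g,d}$ and $\mathcal M_h$ (for instance level structures on $\Alb$, which transport through the morphism), so that $\mathcal{AH}^{[m]}_{g,d}$ and $\mathcal M^{[m]}_h$ are fine moduli schemes for $m \ge 3$, run the argument there, and descend to the stack via the (stacky) Chevalley--Weil theorem, which controls the finite extension $R'/R$ over which an $R$-point of $\mathcal{AH}_{g,d}$ acquires an $R'$-point of the level cover.

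I expect the main obstacle to be precisely this interpolation: checking that the hypotheses of the persistence principle hold for the relevant level covers, that boundedness genuinely propagates along the Albanese morphism at the level of the $\Hom$-moduli involved, and that the level structures can be chosen simultaneously on the hypersurface side and on the canonically-polarized side so that a single application of Chevalley--Weil handles both. A secondary technical point is the foundational geometry of $\mathcal{AH}_{g,d}$: that it is a finite type separated Deligne--Mumford stack, and that the Hilbert polynomial and the identification $\Alb(X)\cong A$ vary well in families, so that the morphism $\mathcal{AH}_{g,d}\to\mathcal M_h$ exists and is quasi-finite in the precise form used.
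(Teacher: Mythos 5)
Your overall architecture is the same as the paper's: reduce the theorem to finiteness of $\pi_0(\mathcal{AH}_{g,d}^{\text{sm}}(R))$, bound the fibres of $X\mapsto [(A,A,\OO_A(X),s)]$ by the finiteness of the group of automorphisms of $A$ preserving $\mathcal L$, construct a quasi-finite morphism to the stack of canonically polarized varieties, feed Lawrence--Sawin's number-ring finiteness into the persistence theorem of \cite{JSZ} on a finite \'etale scheme cover, and descend with the stacky Chevalley--Weil theorem. Your route to quasi-finiteness over $\mathcal{CP}$ (Lefschetz plus Albanese reconstruction of the embedding) is a legitimate alternative to the paper's deformation-theoretic argument, which instead shows the map is unramified by computing that $\mathrm{H}^1(P,T_P(-D))=\mathrm{H}^2(P,T_P(-D))=0$; your version is closer in spirit to the paper's second proof via the substack $\mathcal{CP}^+$ of varieties embedding in their Albanese.

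The one genuine gap is the uniformisation step. Level structures ``on $\Alb$'' --- i.e.\ on the abelian scheme $A$, or equivalently on $\mathrm{H}^1$ of the hypersurface --- do \emph{not} rigidify $\mathcal{AH}_{g,d}^{\text{sm}}$. The relative inertia of $\mathcal{AH}_{g,d}^{\text{sm}}\to\mathcal{A}_{g,d}$ at $(A,P,L,s)$ contains the translations $t_x$ with $x\in H(L)=\ker(\lambda_L)$ that preserve the divisor $V(s)$; such a translation induces the \emph{identity} on the polarized abelian scheme $(A,\lambda_L)$, hence fixes every level-$n$ structure on $A$ and acts trivially on $\mathrm{H}^1$, yet it can be a nontrivial automorphism of the object. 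This is exactly the failure of representability of $\mathcal{AH}_{g,d}\to\mathcal{A}_{g,d}$ flagged in Remark~\ref{preambleuni}, and it is where the paper spends its technical effort: either one adds a theta-group level structure, i.e.\ a trivialization $\sigma\colon\mathcal{G}_{(A,P,L)}\to G(\delta)$ of the Mumford theta group (Proposition~\ref{prop:uniff}), or one puts level structure on the \emph{entire} cohomology $\mathrm{H}^*(V(s),\ZZ/\ell\ZZ)$ of the hypersurface, which works because $\Aut(V(s))$ acts faithfully on $\mathrm{H}^*(V(s),\CC)$ --- a nontrivial fact (Lemma~\ref{lem:sawin}) proved via the Lefschetz fixed-point formula and $e(V(s))\neq 0$, precisely to catch the translations that are invisible in degree one. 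Your plan as written would produce a cover that is still a nontrivial gerbe-like stack at such points, so you need one of these two repairs. Two smaller remarks: the appeal to Lang--N\'eron in the fibre-counting step is unnecessary (finiteness of $\ker(\lambda_{\mathcal L})$ and of the polarized automorphism group already suffices, or one can quote Matsusaka--Mumford as the paper does); and the ``persistence principle'' you invoke should be cited in the precise form of \cite[Theorem~1.4]{JSZ}, applied to the scheme cover, since ``boundedness over $\CC$'' alone is not the stated hypothesis there.
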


	 The problem to overcome in proving Theorem \ref{thm1} is that the abelian scheme may not be defined over a number field (see \cite[Theorem 1.2]{JLitt} for this special case). As such, it is not clear if Lawrence-Sawin's theorem \cite{lawrence2020shafarevich} has arithmetic consequences over the larger fields $K$. Our idea is to introduce a geometric device, a moduli space defined over $\QQ$, to transport finiteness results over to arithmetic rings. After all, the geometry of this moduli space persists when passing to larger fields.

Instead of attempting to reproduce Lawrence-Sawin's arguments over larger fields, we use our work in \cite{JAut, JLalg, JSZ, JLM}  to give a conceptual proof of how to reduce Theorem \ref{thm1} to Lawrence-Sawin's main result \cite{lawrence2020shafarevich}. In fact, our strategy applies more generally to a stack $\mathcal{M}$ with finitely many $\mathcal{O}_{K,S}$-points for any number ring $\mathcal{O}_{K,S}$: \emph{if $\mathcal{M}$ additionally has a finite \'etale atlas and a quasi-finite map to the stack of canonically polarized varieties, then the set of isomorphism classes of $\mathcal{M}(R)$ is finite for all arithmetic rings $R$}.

The ideas of this paper fit into the  broader philosophy  of Shafarevich which says, roughly speaking, that  the set of objects of fixed type over a given arithmetic ring should be finite, under quite general conditions (see, for example,  \cite{Andre, Faltings2, JL, Scholl, Teppei}). This program can be made more precise by viewing objects as points on some geometric object  (such as a variety or stack) and appealing to the hyperbolicity of such a space in combination with Lang-Vojta's conjectures \cite{Lang2}.

Crucial to our proof is the moduli stack $\mathcal{AH}_{g,d}^{\text{sm}}$  classifying   pairs   $(A, P, D)$ with $A$ a $g$-dimensional  abelian variety, $P$ an $A$-torsor, and $D$ a smooth hypersurface in $P$ of  degree $d$;   see Definition \ref{defn} for a precise definition.  This stack is studied carefully in \cite{JLM} and,  to prove Theorem \ref{thm1}, it suffices to prove the finiteness of its $R$-points.

\raggedbottom

\subsection*{Outline of proof}
Combining the finiteness theorems of Lawrence-Sawin  and Faltings (see Theorem \ref{thm:sawinhyp}),  one sees that the  aforementioned stack $\mathcal{AH}_{g,d}^{\text{sm}}$    has only finitely many  points in any given number ring (see Theorem \ref{thm:sawinhyp}). Proving Theorem \ref{thm1}  amounts to establishing the \emph{Persistence Conjecture} (see  \cite[Conjecture~1.5]{JAut} and \cite[Remark~4.13]{JLalg}) for $\mathcal{AH}_{g,d}^{\text{sm}}$. To do so, we proceed in four steps.

\begin{enumerate}
\item We show that $\mathcal{AH}_{g,d}^{\text{sm}}$  is uniformisable over $\mathbb{Z}[1/2d]$ (see Proposition \ref{prop:uniff}), i.e., there is a finite \'etale surjective morphism $U \to \mathcal{AH}_{g,d,\mathbb{Z}[1/2d]}^{\text{sm}}$ with $U$ a $\mathbb{Z}[1/2d]$-scheme.    
\item  We show that    $U$   maps quasi-finitely to the stack of canonically polarized varieties (Corollary \ref{cor:qf}) by studying the deformation theory of smooth ample divisors in abelian varieties. 
\item Since $U$ maps quasi-finitely to the stack of canonically polarized varieties, we may    invoke   \cite[Theorem~1.4]{JSZ} to see that the variety $U$ satisfies the Persistence Conjecture. 
\item   To conclude, we descend the finiteness of $U(R)$ for every arithmetic ring $R$ to    our moduli stack $\mathcal{AH}_{g,d}^{\text{sm}}$   by appealing to the stacky Chevalley-Weil theorem \cite{JLalg} and the fact that $U\to \mathcal{AH}_{g,d,\mathbb{Q}}^{\text{sm}}$ is a finite \'etale cover.
\end{enumerate}

In summary,   the proof of Theorem \ref{thm1} is   obtained by combining Lawrence-Sawin's finiteness result over number fields \cite{lawrence2020shafarevich}  with a careful study of $\mathcal{AH}_{g,d}^{\text{sm}}$ and the work done in \cite{JLalg, JAut, JLM, JSZ}. Thus, this article also illustrates the utility of these latter works. Moreover, the methods of this paper can be applied in wider generality.  For example, we expect that Lawrence-Sawin's results can be extended to certain complete intersections in an abelian variety over a number ring. Then the methods of the present article should imply the finiteness of such complete intersections over arithmetic rings.

As is apparent from the above outline, the uniformisability of the stack $\mathcal{AH}_{g,d}^{\text{sm}}$ plays a central role in our proof. Note that many smooth Deligne-Mumford stacks do not admit finite \'etale atlases! Indeed, such an atlas exists precisely when the stack can be written as a (stack) quotient of an algebraic space by a finite group. For instance, none of the weighted projective lines $\mathcal{P}(1,n)$ for $n>1$ enjoy this property. Moreover, the uniformisability of a given (moduli) stack is of independent interest; see, for example, the related question of Fulton (partially) addressed in \cite{geraschenko2015torus} and a daring conjecture \cite[Conjecture~1.6]{JSZ} concerning the uniformisability of the stack  $\mathcal{CP}$ of   canonically polarized varieties.

We offer two proofs of the uniformisability of $\mathcal{AH}_{g,d, \mathbb{Q}}$. The first combines two types of level structure to yield a uniformisation over $\mathbb{Z}[1/2d]$. The second proof uses transcendental methods and   only works over $\mathbb{Q}$. On the other hand, it shows that the (open) locus in $\mathcal{CP}$ consisting of varieties which embed into an abelian variety is uniformisable, a fact that will be useful in later work. 

	\begin{ack}    
We would like to acknowledge Will Sawin for several useful discussions which ultimately led to our study of the stack $\mathcal{AH}_{g,d}^{\text{sm}}$. The proof of Lemma \ref{lem:sawin} is due to him. We thank Daniel Loughran for further discussions on $\mathcal{AH}_{g,d}^{\text{sm}}$.  The first-named author gratefully acknowledges the IHES for its hospitality. The second-named author is supported by the European Research Council (ERC) under the European Union’s Horizon 2020 research and innovation programme (grant agreement No. 851146).
	\end{ack}

	\section{Abelian schemes, their torsors and the Albanese}
	
	Before introducing the main object of interest, we will introduce a few necessary facts about abelian schemes, their principal homogenous spaces, and the Albanese morphism. Recall that a morphism $\pi: A \to S$ of schemes is said to be an \emph{abelian scheme (over $S$)} if it is a smooth proper group scheme with geometrically connected fibers. If $P \to S$ is a smooth proper morphism of algebraic spaces whose geometric fibers admit the structure of an abelian variety, then we say that $P$ is a \emph{para-abelian} algebraic space (see \cite[VI, Theorem 3.3]{FGA} and \cite[Definition 4.2]{https://doi.org/10.48550/arxiv.2101.10829}). Torsors under an abelian scheme furnish examples of para-abelian algebraic spaces and, in fact, these are the only examples. 
	
	Indeed, the theory of para-abelian spaces is equivalent to the theory of torsors under an abelian scheme, as we will now make precise. Let $\mathscr{P}$ denote the fibered category whose objects over an $S$-scheme are pairs $(A \to T, P \to T)$ where $A$ is an abelian scheme and $P$ is an $A$-torsor. The morphisms in $\mathscr{P}_S$ are pairs of maps of $S$-schemes, $f: A \to A'$ and $g: P \to P'$ where $g$ is equivariant with respect to $f$. Now let $\mathscr{P}'$ denote the fibered category whose objects over an $S$-scheme consists of para-abelian algebraic spaces $P \to S$. Morphisms in $\mathscr{P}'_S$ are morphisms of $S$-schemes. 
		  
\begin{proposition} \label{prop:abtorsor} The forgetful map $F: \mathscr{P} \to \mathscr{P}'$ is an equivalence of fibered categories. \end{proposition}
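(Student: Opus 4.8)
\medskip
\noindent\textbf{Proof strategy.} The plan is to reduce to the fiber categories and then verify essential surjectivity and full faithfulness separately. First I would note that $F$ is cartesian: pullbacks of abelian schemes, of torsors, and of para-abelian algebraic spaces are all computed by base change, so $F$ commutes with the pullback functors of the two fibrations. It therefore suffices to prove that for every $S$-scheme $T$ the induced functor $F_T\colon\mathscr{P}_T\to\mathscr{P}'_T$ on fiber categories is an equivalence of categories, since a cartesian functor of fibered categories which is fiberwise an equivalence is an equivalence of fibered categories.

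Faithfulness of $F_T$ is just the assertion that $f$ is determined by $g$: given morphisms $(f,g)$ and $(f',g)$ from $(A,P)$ to $(A',P')$, pass to the fppf cover $P\to T$ (it is faithfully flat, being smooth and surjective) so that $P$ acquires a section $p$; then the equivariance relations give $f(a)+g(p)=g(a+p)=f'(a)+g(p)$, whence $f=f'$ since the $A'$-action on $P'$ is free and morphisms $A\to A'$ form an fppf sheaf. For fullness, let $g\colon P\to P'$ be a $T$-morphism. Working fppf-locally on $T$ I may assume $P\to T$ admits a section $p$; identifying $P$ with $A$ via $a\mapsto a+p$ and $P'$ with $A'$ via $a'\mapsto a'+g(p)$, the morphism $g$ becomes a $T$-morphism $A\to A'$ carrying the zero section to the zero section, hence a homomorphism $\psi\colon A\to A'$ by the rigidity lemma for abelian schemes; unwinding the identifications gives $g(a+x)=\psi(a)+g(x)$, so $g$ is equivariant with respect to $\psi$. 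One then checks that $\psi$ is independent of the chosen section --- it is the morphism $A\to A'$, $a\mapsto\delta_{P'}\big(g(a+x),g(x)\big)$, which is readily seen to be independent of $x$, where $\delta_{P'}\colon P'\times_T P'\to A'$ denotes the difference morphism of the $A'$-torsor $P'$ --- so the locally defined homomorphisms glue to a homomorphism $f\colon A\to A'$ over $T$, and $(f,g)$ is the required lift of $g$.

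For essential surjectivity, let $P\to T$ be para-abelian. By \cite[VI, Theorem 3.3]{FGA} (see also \cite[Definition 4.2]{https://doi.org/10.48550/arxiv.2101.10829}), there is an fppf cover of $T$ over which $P$ becomes isomorphic to an abelian scheme. Setting $A:=\underline{\Aut}^{0}_{P/T}$, the identity component of the relative automorphism group algebraic space of the smooth proper morphism $P\to T$, this says that, fppf-locally on $T$, $A$ is that abelian scheme acting on $P$ by translations; since being an abelian scheme descends along fppf covers, $A\to T$ is an abelian scheme, and the evaluation morphism $A\times_T P\to P$ exhibits $P$ as an $A$-torsor --- once more a property that may be checked fppf-locally, where it is the translation action of an abelian scheme on itself. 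Hence $F_T(A,P)\cong P$, and $F$ is an equivalence.

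The main obstacle, I expect, is the fullness step --- specifically, globalizing the fppf-local and fiberwise constructions above: one must verify that the canonically described homomorphism $\psi$ really is a morphism of algebraic spaces over $T$ and not merely a map on points, that it is independent of all choices, and that the argument is unaffected by $P$ (and the relevant torsors) being algebraic spaces rather than schemes. A second delicate input, feeding into essential surjectivity, is the representability of $\underline{\Aut}_{P/T}$ for the proper flat algebraic space $P\to T$ and the properness of its identity component; this is precisely where the cited structural results on para-abelian spaces do the real work.
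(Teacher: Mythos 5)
Your overall strategy is sound and genuinely different in structure from the paper's. The paper constructs an explicit quasi-inverse $H$: it takes the subgroup $G \subset \underline{\Aut}_{P/T}$ of automorphisms acting trivially on $\Pic^0_{P/T}$, invokes \cite[Theorem~5.3]{https://doi.org/10.48550/arxiv.2101.10829} to see that $G$ is an abelian scheme with $P$ a $G$-torsor, and then must prove the nontrivial naturality statement that, for a given $A$-torsor $P$, the translation action identifies $A$ with $G$; this is done by showing each $t_a^*$ acts trivially on every $\Pic^0_{P/T}[n]$ (properness of $A$ against affineness of $\underline{\Aut}(\Pic^0_{P/T}[n])$) and then using schematic density of the torsion. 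Your route --- cartesian functor, fiberwise fully faithful plus essentially surjective --- absorbs that naturality issue into full faithfulness, which you establish by an elementary rigidity-plus-descent argument along the fppf cover $P\to T$; this part is a genuine simplification and is correct as written (the formula $\psi(a)=\delta_{P'}(g(a\cdot x),g(x))$, read as a morphism $A\times_T P\to A'$ shown to factor through $\mathrm{pr}_1$ after base change to $P$, does glue the local homomorphisms).

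The step that is not actually proved is the heart of essential surjectivity: you set $A:=\underline{\Aut}^{0}_{P/T}$ and assert that fppf-local triviality of $P$ ``says'' that $A$ is the abelian scheme acting by translations. It does not; local triviality gives an isomorphism $P\cong A_0$ over a cover, and you still need (i) representability of $\underline{\Aut}_{P/T}$ by a group algebraic space locally of finite presentation, (ii) the existence of an open identity component whose formation commutes with base change --- which, since the proposition is stated over an arbitrary base and not just in characteristic zero, requires something like the tangent-space computation $\mathrm{H}^0(A_0,T_{A_0})=\mathrm{Lie}(A_0)$ to get smoothness along the identity section --- and (iii) the identification $\underline{\Aut}^{0}_{A_0/T'}=A_0$. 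This is precisely the content the paper outsources to \cite[Theorem~5.3]{https://doi.org/10.48550/arxiv.2101.10829}, but that theorem characterizes the relevant subgroup by its trivial action on $\Pic^0$ rather than as the identity component, so to quote it you would either have to match the two descriptions or develop the identity-component theory independently. You flag this dependence yourself, so the gap is acknowledged rather than hidden, but as written the essential-surjectivity step rests on an unestablished identification.
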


\begin{proof} We will define an inverse functor $H: \mathscr{P}' \to \mathscr{P}$. If $P \to T$ is an object of $\mathscr{P}'_T$, by \cite[Theorem~5.3]{https://doi.org/10.48550/arxiv.2101.10829}, it follows that the group subscheme $G \subset \underline{\text{Aut}}_{P/T}$ consisting of those $T$-automorphisms $\sigma: P \to P$ with the property that the induced map $\sigma^*:\text{Pic}^0_{P/T} \to \text{Pic}^0_{P/T}$ is the identity, is an abelian scheme and that $P \to T$ is a torsor under $G$. We define $H(P \to T)=(G \to T, P \to T)$.  By \cite[Proposition~5.4]{https://doi.org/10.48550/arxiv.2101.10829}, $H$ is a functor.   It remains to show that, for every $A$-torsor $P$,  there is a natural isomorphism $i: A \to G$ (where $G \subset \underline{\text{Aut}}_{P/T}$ is defined as above) such that the identity $P \to P$ is equivariant for $i$.

Since $A$ acts simply transitively on $P$, there is a given immersion $A \to \underline{\text{Aut}}_{P/T}$. For any $a \in A(T)$, we call the associated automorphism $t_a: P \to P$. Note that it induces an isomorphism $t_a^*: \text{Pic}^0_{P/T} \to \text{Pic}^0_{P/T}$ by pulling back line bundles and, $t_a^*$ necessarily preserves the $n$-torsion for every $n\geq 1$. In other words,  for every $n\geq 1$, there is a natural morphism of group schemes
\[\phi_n: A \to \underline{\text{Aut}}(\text{Pic}^0_{P/T}[n])\]
and, since the latter is affine over $S$ (see, for example, \cite[Lemma 4.1]{LorenziniSchroer}) and $A$ is proper, each $\phi_n$ must be trivial. Since the union of these finite subgroups is schematically dense in $\text{Pic}^0_{P/T}$, this implies $A \subset G \subset \underline{\text{Aut}}_{P/T}$ and since they are both abelian schemes of the same dimension, they are equal as subschemes of $\underline{\text{Aut}}_{P/T}$, as desired. \end{proof}

It follows that when studying torsors under abelian schemes, one may forget the abelian scheme and the accompanying action without losing any essential information.

\begin{theorem} If $X \to S$ is a smooth proper morphism of schemes over $ \mathbb{Q}$, then there is a para-abelian algebraic space $\mathrm{Alb}_{X/S} \to S$ such that:
\begin{enumerate} \item There is an $S$-morphism $X \to \mathrm{Alb}_{X/S}$ which is universal for all maps from $X$ to para-abelian spaces, and 
\item The formation of the algebraic space $\mathrm{Alb}_{X/S} \to S$ and the morphism $X \to \mathrm{Alb}_{X/S}$ are compatible with arbitrary base change on $S$. \end{enumerate} \end{theorem}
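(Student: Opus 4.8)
The plan is to build $\Alb_{X/S}$ by \'etale descent from the classical relative Albanese scheme, using Proposition~\ref{prop:abtorsor} to pass freely between para-abelian algebraic spaces and torsors under abelian schemes. I may assume that $X\to S$ has geometrically connected fibres: replacing $S$ by the Stein factorisation of $X\to S$, which is finite \'etale over $S$ since we work over $\QQ$, changes the statement only formally. The first step is to recall that the relative Picard functor $\Pic_{X/S}$ is representable by an algebraic space (Artin), that its identity component $\Pic^0_{X/S}$ is smooth and proper over $S$ — hence an abelian algebraic space, as $\characteristic \QQ = 0$ — and that its formation commutes with arbitrary base change, the morphism $X\to S$ being cohomologically flat in degree zero (automatic here). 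Dualising, $B := (\Pic^0_{X/S})^\vee$ is an abelian algebraic space over $S$, again of formation compatible with base change; it will be the abelian scheme underlying $\Alb_{X/S}$.

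Next I would put the torsor structure on top of $B$. Choose an \'etale surjection $U\to S$ along which $X_U\to U$ admits a section $e_U$ (possible since $X\to S$ is smooth, proper and, after the reduction above, surjective). The classical Albanese attached to $(X_U,e_U)$ is a $U$-morphism $a_U\colon X_U\to B_U$ with $a_U\circ e_U=0$, universal among $U$-morphisms from $X_U$ to abelian schemes killing $e_U$; under Albanese--Picard duality it corresponds to the identity of $\Pic^0_{X/S}$. Over a double overlap $V = U\times_S U'$ the two morphisms $X_V\to B_V$ induced by $a_U$ and $a_{U'}$ differ by translation by a canonical section $c_{UU'}\in B(V)$ — namely the value at (the restriction of) $e_U$ of the morphism induced by $a_{U'}$ — and over triple overlaps the uniqueness in the universal property forces the additive cocycle identity $c_{UU''}=c_{UU'}+c_{U'U''}$. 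Thus $(c_{UU'})$ represents a class in $\HH^1_{\et}(S,B)$, i.e.\ a $B$-torsor $\Alb^1_{X/S}$, which is an algebraic space by descent, and the $a_U$ glue along this cocycle to an $S$-morphism $\alpha\colon X\to \Alb^1_{X/S}$. By Proposition~\ref{prop:abtorsor}, $\Alb_{X/S}:=\Alb^1_{X/S}$ is a para-abelian algebraic space.

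Finally, both the universal property and base-change compatibility are checked \'etale-locally on $S$, where they reduce to the classical statements: given an $S$-morphism $f\colon X\to P$ with $P$ para-abelian, pass to a cover trivialising $P$ as an abelian scheme, factor $f$ through $B$ up to translation by the universal property of $a_U$, and glue; uniqueness at each stage makes the factorisations descend to a unique $S$-morphism $\Alb_{X/S}\to P$ through which $f$ factors. Compatibility of $\Alb_{X/S}$ and of $\alpha$ with base change is inherited from that of $\Pic^0$, its dual, the local Albanese maps, and the descent datum. I expect the main obstacle to be the foundational input at the start of the second paragraph — that $\Pic^0_{X/S}$ is an abelian algebraic space with base-change-compatible formation \emph{without} any projectivity hypothesis on $X\to S$ and over an arbitrary $\QQ$-base — which rests on Artin's representability theorem and the theory of the relative Picard space; everything afterward is a careful but routine descent. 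Alternatively, the whole statement may simply be quoted from the literature on Albanese maps of smooth proper algebraic spaces (e.g.\ \cite{https://doi.org/10.48550/arxiv.2101.10829}).
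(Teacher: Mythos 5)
Your construction is essentially the standard one, and it is sound for the case that actually matters; note, though, that the paper itself does not construct anything --- its entire proof is the citation ``combine \cite[VI, Theorem 3.3]{FGA} and \cite[Proposition 5.20, Remark 5.21]{fantechi2005fundamental}'', so your proposal is a genuinely different (self-contained) route that unwinds what those references contain: representability and properness of $\Pic^0_{X/S}$, duality to get the underlying abelian scheme $B=(\Pic^0_{X/S})^\vee$, the pointed Albanese after an \'etale base change acquiring a section, and the translation cocycle in $\mathrm{H}^1_{\et}(S,B)$ that glues the local Albanese maps into a $B$-torsor. The cocycle argument, the verification of the universal property by \'etale descent using uniqueness, and the base-change compatibility inherited from $\Pic^0$ are all correct. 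What the citation buys the paper is precisely the foundational input you flag as the ``main obstacle'' (representability of $\Pic_{X/S}$ as an algebraic space and properness and smoothness of $\Pic^0_{X/S}$ without projectivity); what your argument buys is transparency about where the torsor, as opposed to the abelian scheme, comes from.

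The one step I would not let stand is the opening reduction: replacing $S$ by the Stein factorisation $S'$ does \emph{not} change the statement ``only formally''. If $X\to S$ has geometrically disconnected fibres, the object you build lives over $S'$ and is not para-abelian over $S$, and no Weil-restriction trick repairs this: already for $X=\Spec k\sqcup\Spec k$ over $S=\Spec k$, a map $X\to P$ is a pair of points of $P$, so a universal para-abelian target $Q$ (a torsor under some $B$) would have to satisfy $\Hom(Q,P)\cong P(k)\times A(k)$ for every abelian torsor $(A,P)$, forcing $\Hom_{\mathrm{gp}}(B,A)\cong A(k)$ --- impossible, since the left side is a finitely generated group. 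So the theorem must be read with geometrically connected fibres (as it is in every application in the paper, where $X$ is a connected canonically polarized variety); with that hypothesis added at the outset, your proof is correct.
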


\begin{proof} Combine \cite[VI, Theorem 3.3]{FGA} and \cite[Proposition 5.20, Remark 5.21]{fantechi2005fundamental}.\end{proof}

\noindent We call $\text{Alb}_{X/S}$ the \emph{Albanese} associated to $X/S$ and note that it may not have the structure of an abelian scheme, it is merely a \emph{torsor} under one. It admits a group structure if and only if it has an $S$-point, and in this case, the structure is uniquely determined by the choice of $S$-point. 

If $P$ is para-abelian, the functor $\Pic_{P/S}^{0}$ (see \cite[Proposition~2.1.3]{abelianolsson} and \cite[Remark 1.5]{faltingschai}) is representable by an abelian scheme over $S$. Note that if $A$ is an abelian scheme over $S$, then   $A^{\vee}$ will denote the abelian scheme representing $\Pic_{A/S}^{0}$ and is called the \emph{dual} of $A$. Moreover, for every    torsor  $P/S$ under $A/S$, there is a natural identification $A^{\vee} \cong \Pic^0_{P/S}$ (see \cite[Proposition XIII, 1.1 (ii)]{raynaud2006faisceaux}).

	 As in \cite[Definition~2.1.2]{abelianolsson}, a \emph{degree d polarization} on $A$ is a finite flat morphism $\lambda: A \to A^{\vee}$ of group schemes whose kernel is finite locally free over $S$ of degree $d^2$. Given a relatively ample line bundle $L$ on an abelian scheme $A/S$, we say \emph{L has degree d} if the morphism
\[\lambda_L: A \to A^{\vee}, \quad a \mapsto t_a^*L \otimes L^{\vee},\]
is a degree $d$ polarization. Moreover, given a relatively ample line bundle $L$ on a torsor $\pi: P \to S$ under an abelian scheme $A/S$, the sheaf $\pi_*L$ is locally free of some rank $d$ and $L$ induces a polarization $\lambda_{L}: A \to \Pic^0_{P/S}=A^{\vee}$ of degree $d$ (see \cite[2.2.3, 2.2.4]{olssonabelian2}), so in this case we also say \emph{L has degree d}. 

\section{The stack of smooth abelian hypersurfaces}

\begin{definition}\label{defn} Let $\mathcal{AH}_{g,d}$ denote the fibred category over the category of schemes whose fibre over a scheme $S$ is the groupoid of triples $(\pi: P \to S, L, s:\mathcal{O}_P \to L)$ such that 
\begin{enumerate} 
\item $\pi: P \to S$ is a para-abelian algebraic space of relative dimension $g$.
\item $L  $ is a relatively ample line bundle  on $\pi:P\to S$ of degree  $d$ on each geometric fibre of $\pi$.
\item the zero locus $V(s) \subset P$ is flat over $S$. 
\end{enumerate} 
A morphism $(\pi': P' \to S', L', s':\mathcal{O}_{P'} \to L') \to (\pi: P \to S, L, s:\mathcal{O}_{P} \to L)$ over $S' \to S$ consists of a morphism $f$ which makes the following square Cartesian

\begin{center}
\begin{tikzcd}
 P' \arrow[d] \arrow[r, "f"] & P \arrow[d]  \\
 S' \arrow[r] & S 
\end{tikzcd}
\end{center}
and an isomorphism $g: f^*L \simeq L'$ which sends $f^*s$ to $s'$. We call $\mathcal{AH}_{g,d}$ the \emph{moduli stack of abelian hypersurfaces of degree d}. The subcategory $\mathcal{AH}_{g,d}^{\text{sm}} \subset \mathcal{AH}_{g,d}$ consisting of $(\pi: P \to S, L, s:\mathcal{O}_P \to L)$ where $V(s) \subset P$ is smooth over $S$ will be referred to as the \emph{moduli stack of smooth abelian hypersurfaces of degree d}. Similarly, we define  the fibred category  $\overline{\mathcal{AH}}_{g,d}$  whose fibre over a scheme $S$ is the groupoid of pairs $(\pi:P\to S, L)$ with $P$ as in $(1)$ and $L$ as in $(2)$.
\end{definition}
 
\begin{proposition}[Basic properties]\label{prop:basic_properties} The stack  $\mathcal{AH}_{g,d}^{\text{sm}}$   is a finite type   algebraic stack with finite  diagonal over $\mathbb{Z}$.
\end{proposition}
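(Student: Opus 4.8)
The plan is to establish the three claims---algebraicity, finite type, and finite diagonal---in that order, exploiting the fact that $\mathcal{AH}_{g,d}^{\text{sm}}$ sits naturally over the stack $\overline{\mathcal{AH}}_{g,d}$ of polarized para-abelian spaces, which in turn is closely related to $\mathcal{A}_{g,d}$, the well-known algebraic stack of degree-$d$ polarized abelian schemes. First I would handle $\overline{\mathcal{AH}}_{g,d}$: by Proposition \ref{prop:abtorsor}, giving a para-abelian space $P/S$ is the same as giving a torsor under an abelian scheme, and the polarization $L$ of degree $d$ induces (as recalled after Proposition \ref{prop:abtorsor}) a degree-$d$ polarization $\lambda_L \colon A \to A^\vee$; conversely torsors under a fixed $(A,\lambda)$ are classified by the classifying stack of the (finite flat, since $\lambda$ is a polarization) commutative group scheme governing ambiguities, together with the choice of relatively ample $L$ lifting $\lambda$, which is a torsor under $\Pic^0$. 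Assembling these fibrations over $\mathcal{A}_{g,d}$---which is a separated finite type algebraic stack over $\ZZ$ with finite diagonal by Mumford's GIT---shows $\overline{\mathcal{AH}}_{g,d}$ is algebraic, of finite type over $\ZZ$, with finite diagonal; I would isolate this as the technical heart since it is where one must carefully identify the automorphisms of a polarized para-abelian space (translations in $A[\lambda_L]$ composed with automorphisms of $(A,\lambda)$) to get \emph{finiteness} rather than merely quasi-finiteness of the diagonal.

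Next I would pass from $\overline{\mathcal{AH}}_{g,d}$ to $\mathcal{AH}_{g,d}$. Over $\overline{\mathcal{AH}}_{g,d}$ there is a universal pair $(\pi \colon P \to \overline{\mathcal{AH}}_{g,d}, L)$, and by cohomology and base change (using that $L$ is relatively ample of fixed Hilbert polynomial, so $\pi_* L$ is locally free of rank $d$ with vanishing higher direct images on fibres) the functor sending $T \to \overline{\mathcal{AH}}_{g,d}$ to sections $s \colon \OO_{P_T} \to L_T$ is representable by the total space of a vector bundle of rank $d$; the condition that $V(s)$ be flat over the base is open (flatness is open on the source for a proper morphism, and one removes the locus where $s$ vanishes identically on a fibre). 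Hence $\mathcal{AH}_{g,d} \to \overline{\mathcal{AH}}_{g,d}$ is representable by schemes, of finite type, and so $\mathcal{AH}_{g,d}$ is algebraic of finite type over $\ZZ$; its diagonal is finite because the diagonal of $\overline{\mathcal{AH}}_{g,d}$ is finite and the relative diagonal of a relatively schematic affine-type morphism is a closed immersion, hence finite. Finally, $\mathcal{AH}_{g,d}^{\text{sm}} \subset \mathcal{AH}_{g,d}$ is the locus where the proper flat family $V(s) \to T$ is additionally smooth, which is an open substack (smoothness is an open condition on the base for a proper flat morphism); an open substack of a finite type algebraic stack with finite diagonal is again finite type with finite diagonal.

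The main obstacle I anticipate is the bookkeeping for the diagonal of $\overline{\mathcal{AH}}_{g,d}$: one must show $\underline{\Isom}$ between two triples $(P,L)$ and $(P',L')$ over $S$ is finite over $S$, and the subtlety is that an isomorphism $P \to P'$ need not respect any chosen torsor structure, so one leans on Proposition \ref{prop:abtorsor} to recover a canonical abelian scheme $G$ acting on each para-abelian space, reducing to $\Isom$ of polarized abelian schemes (finite, even unramified, classically) composed with a torsor of translations by the finite flat kernel $\ker \lambda_L$. Once this is set up cleanly, the remaining steps are the standard representability arguments via cohomology and base change and the openness of the smooth and flat loci, none of which I would spell out in detail.
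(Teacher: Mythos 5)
Your overall architecture --- realizing $\mathcal{AH}_{g,d}^{\text{sm}}$ as an open substack of the total space of the rank-$d$ bundle $\pi_*L$ over $\overline{\mathcal{AH}}_{g,d}\simeq\mathcal{T}_{g,d}$, which in turn sits over $\mathcal{A}_{g,d}$ --- is exactly the one recorded in Remark \ref{preambleuni}, and it does deliver algebraicity and finite type. (The paper itself does not reprove the proposition; it simply cites \cite[\S 6]{JLM}.) However, your treatment of the diagonal has a genuine gap: $\overline{\mathcal{AH}}_{g,d}$ does \emph{not} have finite diagonal. An object $(P,L)$ carries the automorphisms $(\mathrm{id}_P,c)$ for every unit $c$, because a morphism in this fibred category includes the datum of an isomorphism of line bundles; thus $\mathbb{G}_m$ sits inside the inertia. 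The paper records this explicitly in the proof of Proposition \ref{prop:uniff}: the relative inertia of $\mathcal{T}_{g,d}\to\mathcal{A}_{g,d}$ is an extension $1\to\mathbb{G}_m\to\mathcal{G}_{(A,P,L)}\to H(L)\to 1$, not merely the finite flat group $H(L)$. So the automorphisms of $(P,L)$ are not just ``translations in $\ker\lambda_L$ composed with automorphisms of $(A,\lambda_L)$,'' and your step ``the diagonal of $\mathcal{AH}_{g,d}$ is finite because the diagonal of $\overline{\mathcal{AH}}_{g,d}$ is finite and the relative diagonal is a closed immersion'' starts from a false premise.

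The conclusion is still true, but the $\mathbb{G}_m$ must be killed by the section $s$ rather than inherited from the base: a scalar $c\neq 1$ sends $s$ to $cs\neq s$ provided $s$ is nonzero on every fibre, which has to be built into the meaning of ``hypersurface'' (taken literally, $s=0$ gives $V(s)=P$, which is flat and smooth over $S$, and at that point of the total space the $\mathbb{G}_m$ survives in the inertia; flatness of $V(s)$ alone does not exclude the zero section). Granting this, $\Aut(P,L,s)$ meets $\iota(\mathbb{G}_m)$ trivially and hence injects into $\Aut(P,L)/\mathbb{G}_m$, which is an extension of a subgroup of the finite group scheme $\Aut(A,\lambda_L)$ by $H(L)$; one must then still verify properness of the relevant $\underline{\mathrm{Isom}}$-spaces (e.g.\ by exhibiting them as closed subschemes of finite $S$-schemes) to upgrade quasi-finiteness to finiteness. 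This is precisely the observation the paper exploits later (``units scale the section $s$'') when constructing the uniformisation, and it is the missing ingredient in your argument.
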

\begin{proof}
This is proven in \cite[\S 6]{JLM}. Indeed, the algebraicity  (resp. finiteness of the diagonal) follows from \cite[Proposition~6.3]{JLM} (resp. \cite[Propostion~6.4]{JLM}).
 \end{proof}

	An algebraic  stack   $X$ is \emph{uniformisable (by an algebraic space)} if there exists an algebraic   space $U$ and a finite \'etale   surjective morphism $U\to X$; see \cite[Definition 6.1]{Noohi}. By \cite[Th\'eor\`eme.~6.1]{LMB}, an algebraic stack $X$ is uniformisable if and only if there exists a finite (abstract) group $G$, an algebraic space $U$,  and an action of $G$ on $U$ such that $X\cong [U/G]$.  
	
 	We will construct an explicit uniformisation of the stack $\mathcal{AH}^{\text{sm}}_{g,d,\mathbb{Z}[1/2d]}$. Our proof will require a well-known lemma that we could not locate in the literature. As such, we include the statement and proof below. Recall that a flat affine group scheme $G/S$ is said to be \emph{linearly reductive} if the structure map $\pi: BG \to S$ has the property that $\pi_*:\text{QCoh}(BG) \to \text{QCoh}(S)$ is exact, where $BG:=[S/G]$ denotes the classifying stack of $G$-torsors over $S$ (see, for example, \cite[Tag~0CQJ]{stacks-project}).
	
\begin{lemma}\label{lemma:isom_groups} Let $G$ and $H$ be smooth linearly reductive group schemes over a scheme $S$ such that, for every geometric point $\bar{s}$ of $S$, there is an isomorphism  $G_{\bar{s}} \to H_{\bar{s}}$ of group schemes. Then,   there is an \'etale surjective morphism $S' \to S$ and an isomorphism $G_{S'} \to H_{S'}$. 
\end{lemma}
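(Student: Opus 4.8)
The plan is to reduce the statement to a representability-and-smoothness assertion about the scheme $\underline{\Isom}_{S}(G, H)$ parametrizing group-scheme isomorphisms between $G$ and $H$. Concretely, I would consider the functor on $S$-schemes sending $T$ to the set of isomorphisms of $T$-group schemes $G_T \xrightarrow{\sim} H_T$; call it $\mathcal{I} = \underline{\Isom}_{S\text{-gp}}(G,H)$. The conclusion we want is exactly that $\mathcal{I} \to S$ admits a section étale-locally on $S$, and for that it suffices to show $\mathcal{I} \to S$ is smooth (in particular flat and locally of finite presentation) and surjective, since a smooth surjective morphism of schemes has sections étale-locally by \cite[Tag~055U]{stacks-project}. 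Surjectivity of $\mathcal{I} \to S$ is precisely the hypothesis: every geometric fibre $\mathcal{I}_{\bar s} = \Isom(G_{\bar s}, H_{\bar s})$ is nonempty.

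The main work is therefore to establish that $\mathcal{I} \to S$ is representable by a scheme and smooth. For representability, $\mathcal{I}$ is a locally closed subfunctor of $\underline{\Isom}_{S}(G,H)$ (isomorphisms of schemes, forgetting the group law) cut out by the condition of compatibility with multiplication; since $G$ and $H$ are affine and flat and of finite presentation over $S$ (both being linearly reductive group schemes — here one should note linearly reductive group schemes are affine, so this applies), $\underline{\Isom}_S(G,H)$ is representable by a scheme affine over $S$ by standard Hilbert-scheme / Hom-scheme arguments (or one may cite \cite[Tag~05XH]{stacks-project}-style results), and the group-homomorphism condition is closed, so $\mathcal{I}$ is a scheme. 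For smoothness I would use the infinitesimal criterion: given a square-zero thickening $T_0 \hookrightarrow T$ over $S$ and an isomorphism $\varphi_0 : G_{T_0} \to H_{T_0}$, one must lift it to $G_T \to H_T$. The obstruction to deforming a homomorphism of flat affine group schemes lives in $\HH^2$ of an appropriate Hochschild/group-cohomology complex with coefficients in the Lie algebra (or coordinate ring) of $H$, and the torsor of liftings is governed by $\HH^1$; because $G$ is linearly reductive, these higher cohomology groups vanish (this is the cohomological manifestation of linear reductivity — exactness of $\pi_*$ for $\pi\colon BG \to S$ kills the relevant $\HH^{\geq 1}$ of $G$ acting on quasi-coherent modules). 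Hence every lift exists, and $\mathcal{I} \to S$ is smooth.

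With $\mathcal{I} \to S$ shown to be smooth and surjective, I would take $S' \to S$ to be (for instance) an étale surjective morphism through which a section of $\mathcal{I}$ factors — e.g. pick étale-local sections over an étale cover $\{S_i \to S\}$ and set $S' = \coprod S_i$ — and the tautological section over $S'$ is the desired isomorphism $G_{S'} \xrightarrow{\sim} H_{S'}$.

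The hard part will be the smoothness argument, specifically pinning down the correct deformation-theoretic complex controlling homomorphisms between (possibly non-smooth in the group-theoretic sense, but here smooth) affine group schemes and verifying that linear reductivity of $G$ forces the obstruction group to vanish in the needed generality (i.e. over an arbitrary base $S$, not a field). One clean route is to phrase it via the cotangent complex of the morphism $BG \to BH$ or of the classifying stack maps, but a more hands-on approach through $G$-equivariant cohomology of $\mathcal{O}_H$-modules, using that $G$ linearly reductive implies $\HH^i(G, M) = 0$ for $i \geq 1$ and all $G$-modules $M$, should suffice. I'd also need the elementary observation that smoothness of $G$ and $H$ over $S$ ensures the thickening lifts of the underlying scheme morphism are unobstructed, so that only the group-law compatibility contributes genuine obstructions.
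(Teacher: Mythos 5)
Your proposal is correct and follows essentially the same route as the paper's proof: representability of the group-scheme $\underline{\mathrm{Isom}}$ functor, smoothness via the vanishing of the Hochschild $\mathrm{H}^2$ obstruction group forced by linear reductivity of $G$, surjectivity from the fibrewise hypothesis, and \'etale-local sections of a smooth surjective morphism. The only point you leave implicit --- that the lifted homomorphism over a square-zero thickening is again an isomorphism --- is handled in the paper by citing EGA IV, 17.9.5, and is routine.
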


\begin{proof}  Consider the functor of (group) isomorphisms $I=\underline{\text{Isom}}_S(G,H) \to S$. Since $G$ and $H$ are linearly reductive over $S$,   it follows from  \cite[Theorem~2]{https://doi.org/10.48550/arxiv.2101.12460} and \cite[Remarque I.1.7.3]{zbMATH05948488} that  $I$ is representable by a scheme which is locally of finite presentation over $S$. Moreover, $I \to S$ is a smooth morphism. Indeed, if $A' \to A$ is a surjective morphism of affine schemes with nilpotent ideal $I$, the obstruction to extending an isomorphism $\phi_A: G_A \to H_A$ to a morphism over $\Spec A$ to one over $\Spec A'$ lives in the Hochschild cohomology group $H^2(G_A, \text{Lie}_{H_A/A} \otimes I)$ (see \cite[Corollaire~III.2.6]{zbMATH05948488}). This cohomology group vanishes because $G$ is linearly reductive, and the lifted morphism $\phi_{A'}$ is an isomorphism by \cite[Corollaire~17.9.5]{zbMATH03245973}. Thus, the morphism $I \to S$ is smooth and surjective. In particular,  the morphism $I\to S$   admits sections \'etale locally by \cite[Corollaire~17.16.3 (ii)]{zbMATH03245973}, as desired. 
\end{proof}	
	
	We construct a finite \'etale atlas of $\mathcal{AH}_{g,d, \mathbb{Z}[1/2d]}^{\text{sm}}$ by introducing extra structure to the moduli problem to ensure the resulting moduli stack has trivial stabilizers.  Our construction combines moduli problems introduced by Mumford in \cite{MR204427} and developed further by Olsson in \cite{abelianolsson}. 
	
	\begin{remark}\label{remark:tgd}
By Proposition \ref{prop:abtorsor}, the stack $\overline{\mathcal{AH}}_{g,d}$ is equivalent to the stack $\mathcal{T}_{g,d}$ defined in \cite[Section~5.1.1]{abelianolsson} which parametrizes triples $(A,P,L)$ with $A$ an abelian scheme, $P$ an $A$-torsor, and $L$ a relatively ample degree $d$ line bundle on $P$.
\end{remark}
	
\begin{remark} \label{preambleuni} The stack $\mathcal{AH}_{g,d}^{\text{sm}}$ is  closely related to the moduli stacks $\mathcal{T}_{g,d}$ (Remark \ref{remark:tgd}) and $\mathcal{A}_{g,d}$, where the stack $\mathcal{A}_{g,d} $ parametrizes abelian schemes with a fixed polarization of degree $d$; see \cite[Chapter~5]{abelianolsson}.   
 Indeed, the stack $\mathcal{AH}_{g,d}^{\text{sm}}$ is naturally an open substack of $\mathbb{V}( (\pi_{u})_{\ast} \mathcal{L})$, where $\pi_{u}: (\mathcal{A}, \mathcal{P},  \mathcal{L})\to \mathcal{T}_{g,d}$ is the universal object of $\mathcal{T}_{g,d}$. Thus, we have the following morphisms
 \[
 \mathcal{AH}_{g,d}^{\text{sm}}\subset \mathbb{V}((\pi_{u})_{\ast} \mathcal{L}) \longrightarrow \mathcal{T}_{g,d} \longrightarrow \mathcal{A}_{g,d}.
 \]
 where the first inclusion is open and the rightmost arrow is defined by sending $(A,P,L)$ to $(A,\lambda_L)$ with $\lambda_L: A\to A^{\vee}$   the associated degree $d$ polarization. In fact, this rightmost arrow is a gerbe by \cite[Proposition 5.1.4]{abelianolsson}. Although the stack $\mathcal{A}_{g,d}$ is uniformisable (see \cite[Theorem~7.9]{GIT}), the pull-back of a uniformisation of $\mathcal{A}_{g,d}$ does not always give a uniformisation of $\mathcal{AH}_{g,d}^{\text{sm}}$ since $\mathcal{AH}_{g,d} \to \mathcal{A}_{g,d}$ is not necessarily representable. However, as we aim to show below, one can construct an auxiliary moduli problem to remedy this issue.
\end{remark}

\begin{proposition}\label{prop:uniff} The stack  $\mathcal{AH}_{g,d, \mathbb{Z}[1/2d]}^{\text{sm}}$ is uniformisable.
\end{proposition}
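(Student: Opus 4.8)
The plan is to rigidify $\mathcal{AH}^{\mathrm{sm}}_{g,d}$ over $\ZZ[1/2d]$ by adding two kinds of level structure, so that the enlarged moduli problem is representable by an algebraic space which maps finite \'etale onto $\mathcal{AH}^{\mathrm{sm}}_{g,d,\ZZ[1/2d]}$. Fix an integer $m\geq 3$ all of whose prime factors divide $2d$ (e.g.\ $m=4$). I would work with the category $\mathcal{M}$ fibred over $\Spec\ZZ[1/2d]$ whose objects over $S$ are tuples $(\pi\colon P\to S, L, s, \theta,\alpha)$ where $(\pi\colon P\to S, L, s)$ is an object of $\mathcal{AH}^{\mathrm{sm}}_{g,d}$, where $\theta$ is a theta structure (in the sense of Mumford \cite{MR204427} and Olsson \cite{abelianolsson}) on the theta group $\mathcal{G}(L)$ — the $S$-group scheme of pairs $(a,\psi)$ with $a\in\ker\lambda_L$ and $\psi\colon t_a^{\ast}L\xrightarrow{\sim}L$ — and where $\alpha\colon A[m]\xrightarrow{\sim}(\ZZ/m)^{2g}$ is an isomorphism of group schemes, $A:=(\Pic^0_{P/S})^{\vee}$ being the abelian scheme canonically attached to $P$ by Proposition \ref{prop:abtorsor}. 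Because $2d$ is invertible, $A[m]$ and $\ker\lambda_L$ are finite \'etale over $S$ of constant ranks $m^{2g}$ and $d^2$, so $\mathcal{G}(L)$ is an extension of a finite \'etale group scheme by $\Gm$; in particular it is smooth and linearly reductive, as is the standard Heisenberg group $\mathcal{H}(\delta)$ of the corresponding type $\delta$ — this is what will allow me to invoke Lemma \ref{lemma:isom_groups}.

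First I would show that the forgetful morphism $\mathcal{M}\to\mathcal{AH}^{\mathrm{sm}}_{g,d,\ZZ[1/2d]}$ is finite \'etale and surjective, by factoring it through the intermediate stack forgetting only $\theta$. The morphism forgetting $\alpha$ is the scheme $\underline{\Isom}(A[m],(\ZZ/m)^{2g})$, which is finite \'etale since $A[m]$ is finite \'etale of constant rank, and surjective since such trivializations exist \'etale-locally. The morphism forgetting $\theta$ is the scheme of theta structures on $\mathcal{G}(L)$; I would argue that it is a torsor under the automorphism group scheme of $\mathcal{H}(\delta)$ fixing the central $\Gm$, which is finite \'etale over $\ZZ[1/2d]$ (it is an extension of a subgroup of the symplectic group of the finite symplectic module $\ker\lambda_L$ by the finite \'etale group $\underline{\Hom}(\ker\lambda_L,\Gm)$), and that it is surjective because $\mathcal{G}(L)$ and $\mathcal{H}(\delta)$ are geometrically isomorphic as $\Gm$-extensions (Mumford), so that Lemma \ref{lemma:isom_groups} furnishes theta structures \'etale-locally. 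Composing, $\mathcal{M}\to\mathcal{AH}^{\mathrm{sm}}_{g,d,\ZZ[1/2d]}$ is finite \'etale surjective, and $\mathcal{M}$ is in particular a finite-type algebraic stack.

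The crux is then to check that $\mathcal{M}$ has trivial automorphism groups, for then it is an algebraic space and we are done by the definition of uniformisability (\cite{Noohi}, \cite{LMB}). Given an automorphism $(g,\phi)$ of a tuple $(\pi\colon P\to S, L, s, \theta, \alpha)$: since $g$ preserves the ample class of $L$, it normalizes $A\subseteq\underline{\Aut}_{P/S}$ (characterised as in the proof of Proposition \ref{prop:abtorsor}) and induces $g_{\ast}\in\Aut(A,\lambda_L)$, a group of finite order; compatibility with $\alpha$ makes $g_{\ast}$ act trivially on $A[m]$, so Serre's rigidity lemma (here $m\geq 3$) gives $g_{\ast}=\mathrm{id}_A$, whence $g=t_a$ for a unique $a\in A(S)$. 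The existence of $\phi\colon t_a^{\ast}L\xrightarrow{\sim}L$ forces $\lambda_L(a)=0$, so $\tilde a:=(a,\phi)$ is a section of $\mathcal{G}(L)$, and the automorphism of $\mathcal{G}(L)$ induced by $(t_a,\phi)$ is conjugation by $\tilde a$, i.e.\ $b\mapsto e^L(a,b)\cdot b$ where $e^L$ is the commutator pairing; compatibility with $\theta$ forces this to be the identity, and nondegeneracy of $e^L$ on $\ker\lambda_L$ forces $a=0$. Thus $g=\mathrm{id}_P$ and $\phi\in\Gm(S)$, and $\phi\cdot s=s$ together with $s$ being nowhere vanishing on the fibres of $\pi$ forces $\phi=1$. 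Hence $\mathcal{M}$ is an algebraic stack with representable diagonal and no nontrivial automorphisms, so it is an algebraic space.

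I expect the two main difficulties to be (i) the automorphism computation, specifically the assertion that a theta structure rigidifies the residual translations $t_a$ with $a\in\Stab(V(s))\subseteq\ker\lambda_L$ — this rests on the identification of the induced theta-group automorphism with conjugation and on the nondegeneracy of the commutator pairing — and (ii) establishing that the space of theta structures is a finite \'etale $\ZZ[1/2d]$-scheme: this is exactly where inverting $d$ (to make $\ker\lambda_L$, hence $\mathcal{G}(L)$, \'etale over $\Gm$, so that Mumford's theory and Lemma \ref{lemma:isom_groups} apply) and inverting $2$ (for the level-$m$ structure with $m\geq 3$, or equivalently for symmetric theta structures) are needed. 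Note that the section $s$ is essential precisely because it kills the central $\Gm$ in the automorphism groups of $\overline{\mathcal{AH}}_{g,d}\cong\mathcal{T}_{g,d}$, the obstruction flagged in Remark \ref{preambleuni}.
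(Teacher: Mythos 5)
Your proposal is correct and follows essentially the same strategy as the paper's first proof: rigidify by a theta structure on the theta group $\mathcal{G}(L)$ (whose \'etale-local existence over $\ZZ[1/2d]$ is supplied by linear reductivity via Lemma \ref{lemma:isom_groups} and Mumford's classification over fields) together with a level structure on $A[m]$, then check that translations are killed by centrality in the theta group, polarized automorphisms by the level structure, and the residual $\Gm$ by the section $s$; the paper merely packages the two rigidifications sequentially (via $\Sigma_{g,\delta}$ and the pullback of $\mathcal{A}_{g,d,n}\to\mathcal{A}_{g,d}$) rather than in one moduli problem. (Only a wording slip: $s$ is of course not nowhere vanishing on fibres, just not identically zero there, which is all you need to force $\phi=1$.)
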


\begin{proof} By definition, the relative inertia group $\mathcal{G}_{(A,P,L)}$ of the morphism $\mathcal{T}_{g,d} \longrightarrow \mathcal{A}_{g,d}$ at an object $(A,P,L) \in \mathcal{T}_{g,d}(S)$ is the kernel of the homomorphism
\[ \underline{\text{Aut}}_{\mathcal{T}_{g,d}}(A,P,L) \to \underline{\text{Aut}}_{\mathcal{A}_{g,d}}(A,\lambda_L).\]
Thus, $\mathcal{G}_{(A,P,L)}(S)$ is  the group of pairs $(f,g)$, where $f:P \to P$ is an $A$-equivariant automorphism and $g:f^*L \to L$ is an isomorphism. However, since $f$ respects the action of the abelian scheme $A$, the morphism $f$ is equal to translation by a (unique) point $x \in A(S)$. Furthermore,  since we have $t_x^*L \cong L$, it follows that $x \in H(L)(S)$, where $H(L)=\text{Ker}(\lambda_L: A \to A^{\vee}=\Pic^0_{P/S})$. In fact, we have an exact sequence of group sheaves
\[1 \to \mathbb{G}_m \xrightarrow{\iota} \mathcal{G}_{(A,P,L)} \to H(L) \to 1,\]
where $c \in \mathbb{G}_m(S)$ gets mapped to $(\text{id}_P,c)$ (see \cite[5.1.3]{abelianolsson}). Note that the subgroup $$\underline{\text{Aut}}_{\mathcal{AH}_{g,d}}(A,P,L,s) \subset \underline{\text{Aut}}_{\mathcal{T}_{g,d}}(A,P,L)$$ intersects $\iota(\mathbb{G}_m)$ trivially, as  units scale the section $s$. 

To prove the proposition, it suffices to find a finite \'etale cover $\psi: \Sigma_{g,d} \to \mathcal{T}_{g,d,\mathbb{Z}[1/2d]}$ such that, for every $p \in \Sigma_{g,d}(S)$, the induced subgroup scheme 
\[\underline{\text{Aut}}_{\Sigma_{g,d}}(p) \hookrightarrow \underline{\text{Aut}}_{\mathcal{T}_{g,d, \mathbb{Z}[1/2d]}}(A,P,L)\] intersects $\mathcal{G}_{(A,P,L)}$ in $\iota(\mathbb{G}_m)$ (with $\psi(p)=(A,P,L))$. Indeed, in this case the pullback $$\overline{\Sigma}_{g,d}=\Sigma_{g,d} \times_{\mathcal{T}_{g,d, \mathbb{Z}[1/2d]}} \mathcal{AH}_{g,d, \mathbb{Z}[1/2d]}^{\text{sm}}$$ has automorphism groups which meet the various $\mathcal{G}_{(A,P,L)}$ trivially.
Thus, $\overline{\Sigma}_{g,d}$ is a finite \'etale cover of $\mathcal{AH}_{g,d, \mathbb{Z}[1/2d]}^{\text{sm}}$ and is representable \emph{over} the stack $\mathcal{A}_{g,d}$. Now, let $\mathcal{A}_{g,d,n} \to \mathcal{A}_{g,d}$ denote the moduli stack of degree $d$ polarized abelian schemes with level $n$ structure, and note that $\mathcal{A}_{g,d,n}$ is a scheme and that the morphism is finite, \'etale and surjective  (see, e.g., the proof of \cite[Theorem~2.1.11]{olssonabelian2} and \cite[Theorem 7.9]{GIT}). It follows that $\mathcal{A}_{g,d,n} \times_{\mathcal{A}_{g,d}} \overline{\Sigma}_{g,d} \to \mathcal{AH}^{\text{sm}}_{g,d,\mathbb{Z}[1/2d]}$ is a uniformisation. 

It remains to construct the auxiliary moduli stack $\Sigma_{g,d}$. Fix a sequence of $g$ positive integers $\delta=(d_1,...,d_g)$ where $d_i\mid d_{i+1}$ with $d=d_1\cdots d_g$ (we call $\delta$ a \emph{type}) and define the group scheme $G(\delta)$ over $S$, a $\mathbb{Z}[1/2d]$-scheme, as follows.  First, its underlying scheme is $\mathbb{G}_{m} \times K(\delta) \times K(\delta)^{\vee}$, where $K(\delta)=\bigoplus_{i=1}^g \mathbb{Z}/d_i\mathbb{Z}$ and $K(\delta)^{\vee}$ denotes the Cartier dual of $K(\delta)$, i.e. $K(\delta)^{\vee}= \bigoplus_{i=1}^g \mu_{d_i}$. Then, the group law is defined to be
\[(\alpha, x,l)(\alpha',x',l')=(\alpha\alpha'l'(x),xx',ll').\] This  shows that there is an exact sequence
\[1 \to \mathbb{G}_m \to G(\delta) \to H(\delta) \to 1,\]
where $H(\delta)=K(\delta) \times K(\delta)^{\vee}$. In fact, given any $(A,P,L) \in \mathcal{T}_{g,d}(S)$, we see that \'etale locally on $S$ there is a type $\delta$ and an isomorphism of groups $\phi$ which make the following diagram commute

\begin{center}
\begin{tikzcd}
 1 \arrow[r] & \mathbb{G}_m \arrow[d,"\textrm{id}"] \arrow[r] & \mathcal{G}_{(A,P,L)} \arrow[d,"\phi"] \arrow[r]  & H(L) \arrow[r] \arrow[d] & 1 \\
 1 \arrow[r] & \mathbb{G}_m \arrow[r] & G(\delta) \arrow[r]  & H(\delta) \arrow[r] & 1.
\end{tikzcd}
\end{center}
When $S$ is an algebraically closed field, this follows from \cite[pg. 294-295]{MR204427}. To prove this for a general scheme $S$,  note that the type $\delta$ for each $\mathcal{G}_{(A,P,L)}$ is locally constant on $S$, so that  we may assume $S$ is connected and that $\delta$ is constant. Thus, there are isomorphisms $\phi$ as above at every geometric point of $S$. Therefore, since $G(\delta)$ and $\mathcal{G}_{(A,P,L)}$ are   linearly reductive by \cite[Proposition~12.17]{goodmodulispaces}, there exists   an isomorphism \'etale locally on $S$ by Lemma \ref{lemma:isom_groups}.

The fact that isomorphisms exist \'etale locally implies that isomorphisms inducing the identity on $\mathbb{G}_m$ also exist \'etale locally. Indeed,  any isomorphism preserves the connected component and hence induces an automorphism of $\mathbb{G}_m$, so that we get a morphism \[\underline{\text{Isom}}(G_{(A,P,L)},G(\delta)) \to \underline{\text{Aut}}(\mathbb{G}_m)=\mathbb{Z}/2\mathbb{Z}.\]
Let $I_{\mathbb{G}_m}$ be its (open) fiber over the identity section, and note that   $I_{\mathbb{G}_m}$ inherits smoothness from $I$. Surjectivity of $I_{\mathbb{G}_m} \to S$ follows from the case when $S$ is a field (see \cite[pg. 294-295]{MR204427}). Thus, it follows that $I_{\mathbb{G}_m}$ admits points \'etale locally on $S$, i.e., isomorphisms $\phi$ which induce the identity of $\mathbb{G}_m$ exist \'etale locally.

Consider the open and closed substack $\mathcal{T}_{g,\delta} \subset \mathcal{T}_{g,d, \mathbb{Z}[1/2d]}$ consisting of those $(A,P,L)$ with $H(L)$ having type $\delta$. Following \cite[6.3.22]{abelianolsson}, we define $\Sigma_{g,\delta} \to \Spec \mathbb{Z}[1/2d]$ to be the stack of tuples whose fiber over $S$ is $(A,P,L, \sigma)$ where $(A,P,L) \in \mathcal{T}_{g,\delta}(S)$ and $\sigma:\mathcal{G}_{(A,P,L)} \to G(\delta)$ is an isomorphism of group schemes which is the identity over the respective copies of $\mathbb{G}_m$. A morphism $(A,P,L, \sigma) \to (A',P',L', \sigma')$ in $\Sigma_{g,\delta}(S)$ is a map $\eta: (A,P,L) \to (A',P',L')$ in $\mathcal{T}_{g,\delta}(S)$ such that 

\begin{center}\begin{tikzcd}
G(\delta) \arrow[rd, "\sigma'"] \arrow[r, "\sigma"] & \mathcal{G}_{(A,P,L)} \arrow[d, "\eta(-)\eta^{-1}"] \\
& \mathcal{G}_{(A',P',L')} 
\end{tikzcd} \end{center}

\noindent commutes. 

The morphism $\Sigma_{g, \delta} \to \mathcal{T}_{g,\delta}$  defined by forgetting $\sigma$ is finite, \'etale, and surjective. Indeed, the argument above shows that the fiber product of this morphism along any map $S \to \mathcal{T}_{g,\delta}$ is a torsor under the group of automorphisms $\underline{\text{Aut}}_{\mathbb{G}_m}(G(\delta))$ of $G(\delta)$ which act as the identity on $\mathbb{G}_m$. By \cite[Proposition 6.3.7]{abelianolsson} it follows that the morphism is finite and \'etale. Since $\Sigma_{d,\delta}\to \mathcal{T}_{g,\delta}$ is surjective, it remains to show that 
\[\underline{\text{Aut}}_{\Sigma_{g,\delta}}(A,P,L, \sigma) \subset \underline{\text{Aut}}_{\mathcal{T}_{g,d,\mathbb{Z}[1/2d]}}(A,P,L)\]
 intersects $\mathcal{G}_{(A,P,L)}$ in $\iota(\mathbb{G}_m)$. If $g$ is a point of 
 \[\underline{\text{Aut}}_{\Sigma_{g,\delta}}(A,P,L, \sigma) \times_{\underline{\text{Aut}}_{\mathcal{T}_{g,d,\mathbb{Z}[1/2d]}(A,P,L)}} \mathcal{G}_{(A,P,L)}\]
 then  conjugation by $g$ preserves the isomorphism $\sigma: \mathcal{G}_{(A,P,L)} \to G(\delta)$. In other words, $g$ defines a central element of $\mathcal{G}_{(A,P,L)}$. Since the center of this group is $\iota(\mathbb{G}_m)$, this completes the proof.  \end{proof}

\begin{remark} \label{rem:olsson} The statement that the morphism $\Sigma_{g, \delta} \to \mathcal{T}_{g,\delta}$ defined by forgetting $\sigma$ is finite and \'etale is stated in \cite[Proposition 6.3.23]{abelianolsson}. However, we were unable to find a reference for the fact that the isomorphisms $\sigma: \mathcal{G}_{(A,P,L)} \to G(\delta)$ exist fppf locally. As such, we included this argument in the proof above to ensure completeness. \end{remark}

\section{Quasi-finiteness of the forgetful functor}\label{section3}
 
Let $\mathcal{CP}$  be the stack of smooth proper canonically polarized varieties over $\mathbb{Q}$, i.e., for $S$ a scheme over $\mathbb{Q}$, the objects of the groupoid $\mathcal{CP}(S)$ are smooth proper morphisms $X\to S$ of schemes whose geometric fibres are connected with ample canonical bundle, morphisms in $\mathcal{CP}(S)$ are $S$-isomorphisms of schemes.
For $h \in \mathbb{Q}[t]$ a polynomial, we let $\mathcal{CP}_h$ be the substack of smooth proper canonically polarized varieties with Hilbert polynomial $h$ (where the Hilbert  polynomial of $X\to S$ is computed with respect to $\omega_{X/S}$). Note that  $\mathcal{CP}_h$ is an open and closed substack of $\mathcal{CP}$, and that    $\mathcal{CP}$  is the (countable) disjoint union of the stacks $\mathcal{CP}_h$, where $h$ runs over $\mathbb{Q}[t]$. For each polynomial $h$,  the stack $\mathcal{CP}_h$ is a finite type separated Deligne-Mumford algebraic stack over $\mathbb{Q}$ with a quasi-projective coarse space; see \cite{Viehweg06}. 

Throughout this section, we let $g\geq 2$ be an integer and let $d$ be a positive integer. In particular, if $A$ is an abelian variety over a field $k$ of dimension $g$ and $D\subset A$ is a smooth ample hypersurface whose associated line bundle has degree $d$, then $\omega_{D/k}$ is ample by adjunction and $D$ is connected. Therefore, 
there is a functor  
\[ 
\mathcal{AH}_{g,d, \mathbb{Q}}^{\text{sm}} \to \mathcal{CP}, \quad (P,L,s) \mapsto \mathrm{V}(s)
\]   
which sends a tuple $(P, L,s)$ to the canonically polarized scheme   $\mathrm{V}(s)$ (and forgets the embedding $\mathrm{V}(s)\subset P$) for $g \geq 2$.

\begin{lemma}\label{lem:reps} The morphism  \[ 
\mathcal{AH}_{g,d, \mathbb{Q}}^{\text{sm}} \to \mathcal{CP}, \quad (P,L,s) \mapsto \mathrm{V}(s)
\] is representable.
\end{lemma}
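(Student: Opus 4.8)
The plan is to verify representability by unwinding the definition of a morphism of stacks into $\mathcal{CP}$: it suffices to show that for every scheme $S$ over $\mathbb{Q}$ and every object $X \in \mathcal{CP}(S)$, the $2$-fibre product $\mathcal{AH}_{g,d,\mathbb{Q}}^{\text{sm}} \times_{\mathcal{CP}} S$ is an algebraic space. Concretely, a point of this fibre product over an $S$-scheme $T$ consists of a tuple $(\pi\colon P \to T, L, s)$ together with an isomorphism $\varphi\colon \mathrm{V}(s) \xrightarrow{\sim} X_T$ of schemes over $T$. The key observation is that, because $\dim A = g \geq 2$, the divisor $D = \mathrm{V}(s)$ is ample in $P$, so by the Lefschetz-type theorems (specifically the fact that $\mathrm{Alb}$ is a birational invariant plus the Lefschetz hyperplane theorem for the Albanese map, valid here since $g \geq 2$) the restriction map induces an isomorphism $\mathrm{Alb}_{D/T} \xrightarrow{\sim} P$ — indeed $P$ is the Albanese torsor of its own ample divisor $D$. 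Hence the para-abelian space $P$ is \emph{recovered functorially} from $D$ as $\mathrm{Alb}_{D/T}$, using the Albanese functor for smooth proper morphisms over $\mathbb{Q}$ recalled in the excerpt (its formation commutes with base change).

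With that in hand I would argue as follows. Given $X \in \mathcal{CP}(S)$, set $Q := \mathrm{Alb}_{X/S} \to S$, a para-abelian algebraic space over $S$. Any object of the fibre product over $T$ yields, via $\varphi$ and the naturality of Albanese, an isomorphism $P \cong \mathrm{Alb}_{D/T} \cong \mathrm{Alb}_{X_T/T} \cong Q_T$. Under this identification, the data $(P,L,s,\varphi)$ becomes: a closed immersion $X_T \hookrightarrow Q_T$ (the Albanese embedding of $X_T$, which is canonical), a line bundle $L$ on $Q_T$ restricting appropriately, and the section $s$ cutting out the image. So the fibre product is identified with a functor on $S$-schemes classifying such embedding data on the \emph{fixed} family $Q/S$. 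This functor is represented by a locally closed subscheme (or algebraic space) of a relative Picard scheme bundled with a section: the relevant moduli of line bundles on $Q/S$ of degree $d$ is a torsor under $\mathrm{Pic}^0_{Q/S}$ (an abelian algebraic space), the choice of $s$ up to the $\mathbb{G}_m$-scaling is cut out inside the projectivization $\mathbb{P}((\pi_Q)_* L)$, and the condition that $\mathrm{V}(s) \cong X_T$ over $T$ (compatibly with the Albanese embedding) is representable since it amounts to an equality of closed subschemes of $Q_T$, hence is a closed (in fact locally closed) condition. Each of these building blocks is an algebraic space, and the fibre conditions (smoothness of $\mathrm{V}(s)$, flatness) are open, so the fibre product is an algebraic space.

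The main obstacle — and the step I would spend the most care on — is the precise statement that $\mathrm{Alb}$ recovers $P$ from its ample divisor $D$ and that this is natural enough to glue over the base $T$: one needs that for a smooth ample hypersurface $D$ in a para-abelian space $P$ of relative dimension $g \geq 2$, the canonical map $D \to P$ (restriction of $\mathrm{id}_P$) realizes $P$ as $\mathrm{Alb}_{D/T}$, \emph{functorially in $T$}. Over an algebraically closed field this is classical (Lefschetz), but the relative statement requires invoking the base-change compatibility of $\mathrm{Alb}$ from the theorem in the excerpt together with the fact that being an isomorphism can be checked on geometric fibres. I would also need to be slightly careful that the section $s$ itself (not just the divisor $\mathrm{V}(s)$) is part of the moduli data, but since morphisms in $\mathcal{CP}$ do not see $s$ while morphisms in $\mathcal{AH}^{\text{sm}}_{g,d}$ record $s$ up to the scaling automorphisms, this is exactly accounted for by the $\mathbb{G}_m$-bundle structure and causes no trouble. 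Everything else — representability of relative $\mathrm{Pic}$, of spaces of sections, and the locally-closed nature of an isomorphism-of-subschemes condition — is standard.
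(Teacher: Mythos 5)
Your approach is genuinely different from the paper's, and it has a real gap in the stated generality. The paper proves representability by showing the relative inertia of $\mathcal{AH}_{g,d,\mathbb{Q}}^{\text{sm}} \to \mathcal{CP}$ is trivial: the inertia morphism is finite (both stacks have finite diagonal) and unramified (characteristic zero), so one only needs triviality of its geometric fibres, and there an automorphism of $(A,P,L,s)$ restricting to the identity on $D=\mathrm{V}(s)$ is a homomorphism $f$ with $f-\mathrm{id}_A$ vanishing on the ample divisor $D$, which generates $A$ whenever $g\geq 2$; hence $f=\mathrm{id}_A$. Your argument instead reconstructs $P$ as $\mathrm{Alb}_{D/T}$, and this is exactly where it breaks: the Lefschetz hyperplane theorem gives $\pi_1(D)\xrightarrow{\sim}\pi_1(P)$ (equivalently $\mathrm{H}^1(P,\ZZ)\xrightarrow{\sim}\mathrm{H}^1(D,\ZZ)$) only for $\dim P\geq 3$, not for $g\geq 2$ as you assert. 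For $g=2$ and $d\geq 2$, $D$ is a curve of genus $d+1>2$ in an abelian surface, so $\mathrm{Alb}_{D}=\mathrm{Jac}(D)$ has dimension $d+1\neq 2$ and is \emph{not} $P$; the Albanese only surjects onto $P$. Since the lemma is stated under the section's standing hypothesis $g\geq 2$, your proof does not cover this case, whereas the paper's "$D$ generates $A$" argument does.

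For $g\geq 3$ your strategy is viable and in fact yields more than the lemma asks: once $P\cong\mathrm{Alb}_{\mathrm{V}(s)/T}\cong Q_T$ canonically and the embedding $\mathrm{V}(s)\subset P$ is identified with the Albanese map of $X_T$, the pair $(L,s)$ with $\mathrm{V}(s)$ the prescribed divisor is determined up to \emph{unique} isomorphism (an automorphism of $L$ fixing the nonzero section $s$ is the identity), so the $2$-fibre product over $S$ is a locally closed subscheme of $S$ and the morphism is actually a monomorphism — consistent with the injectivity on tangent spaces in Proposition \ref{prop:tangent}. This makes your final paragraph about relative Picard schemes, $\Pic^0$-torsors, and projectivized section spaces unnecessary (there is no residual moduli to parametrize), and you would still need to write down carefully the relative Lefschetz statement that $\mathrm{Alb}_{D/T}\to P$ is an isomorphism, checked on geometric fibres. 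If you want the lemma as stated ($g\geq 2$), you should replace the Albanese reconstruction by the inertia computation, or at least treat $g=2$ separately by the generation argument.
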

\begin{proof}  
By \cite[Tag 04YY]{stacks-project},  we need to show  that the relative inertia stack 
\[\mu: I_{\mathcal{AH}_{g,d, \mathbb{Q}}^{\text{sm}}/\mathcal{CP}} \to \mathcal{AH}_{g,d, \mathbb{Q}}^{\text{sm}}\]
is a trivial group stack. The morphism $\mu$ is finite, as $\mathcal{AH}_{g,d, \mathbb{Q}}^{\text{sm}}$ and $\mathcal{CP}$ both have finite diagonals. Moreover, $\mu$ is unramified because its geometric fibers are finite reduced group schemes by Cartier's theorem (see \cite[Tag 047O]{stacks-project}). Thus, by \cite[Tag 04DG]{stacks-project}, if we show the geometric fibers of $\mu$ are all singletons then it would be a closed immersion and because $\mu$ admits a section it would necessarily be an isomorphism.  On the other hand,  if $x: \Spec \bar{k} \to \mathcal{AH}_{g,d, \mathbb{Q}}^{\text{sm}}$ is a $\bar{k}$-valued point, the fiber over $\mu$ is isomorphic to the kernel of the morphism of $\bar{k}$-group schemes $\text{Aut}_{\mathcal{AH}_{g,d, \mathbb{Q}}^{\text{sm}}}(x) \to \text{Aut}_{\mathcal{CP}}(f(x))$ (see, for example, the second diagram in the proof of \cite[Tag 050Q]{stacks-project}).  Thus,  we see that  it suffices to show that the kernel $K$ of the natural homomorphism from the inertia group of a $\bar{k}$-object $(A,P,L,s)$ of $\mathcal{AH}_{g,d,\mathbb{Q}}^{\text{sm}}$ to the automorphism group $\mathrm{Aut}(D)$ of $D:=\mathrm{V}(s)$ is trivial.  

To do so, we fix an isomorphism $P\cong A$ such that the hypersurface $D\subset P\cong A$ contains the origin of $A$. This can be done because $\bar{k}$ is algebraically closed. Then, an element of the above kernel $K$ corresponds to a homomorphism $f:A\to A$  with  $f(x) = x$ for every $x$ in $D$.   Since the kernel of the homomorphism $f-\mathrm{id}_A$ contains the ample divisor $D$ and     the smallest abelian subvariety containing $D$ is $A$ (since $g\geq 2$ and $\omega_{D/\mathbb{C}}$ is ample) the kernel of $f-\mathrm{id}_A$ equals $A$, so that    $f=\mathrm{id}_A$.  This shows  that  the kernel $K$ is trivial, and  concludes the proof.
\end{proof}

If $k$ is a field of characteristic zero and   $(A,P,L,s)$ is a $k$-object of $\mathcal{AH}_{g,d,\mathbb{Q}}^{\text{sm}}$, we let \[\mathrm{T}_{\mathcal{AH}_{g,d,\mathbb{Q}}^{\text{sm}}}((A,P,L,s))\] 	 denote the tangent space to $(A,P,L,s)$ in the stack  	$\mathcal{AH}_{g,d,\mathbb{Q}}^{\text{sm}}$. Similarly, for $D$ a $k$-object of $\mathcal{CP}$, we let $\mathrm{T}_{\mathcal{CP}}(D)$ be the tangent space of the object $D$ to $\mathcal{CP}$, and we note  that $\mathrm{T}_{\mathcal{CP}}(D)$ equals  $\mathrm{H}^1(D,\mathrm{T}_D)$.
The morphism \[ 
\mathcal{AH}_{g,d, \mathbb{Q}}^{\text{sm}} \to \mathcal{CP}, \quad (P,L,s) \mapsto \mathrm{V}(s)
\] defined above induces for each $k$-object $(A,P,L,s)$ of $
\mathcal{AH}_{g,d, \mathbb{Q}}^{\text{sm}} $  a  morphism of $k$-vector spaces
 \begin{eqnarray}\label{tan}
 \mathrm{T}_{\mathcal{AH}_{g,d,\mathbb{Q}}^{\text{sm}}}((A,P,L,s))\to  \mathrm{T}_{\mathcal{CP}}(\mathrm{V}(s)).
\end{eqnarray}
\begin{proposition}\label{prop:tangent} Let $k$ be a field of characteristic zero and let $(A,P,L,s)$ be a $k$-object of 
$\mathcal{AH}_{g,d, \mathbb{Q}}^{\text{sm}}$.  Then the following statements hold.
\begin{enumerate}
\item  The map (\ref{tan}) is injective.
\item If $g\geq 3$, then the map  (\ref{tan}) is an isomorphism. 
\end{enumerate}
\end{proposition}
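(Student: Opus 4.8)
\textit{Set-up.} Since neither injectivity nor bijectivity of the $k$-linear map $(\ref{tan})$ is affected by extending the base field, I first pass to an algebraic closure of $k$; then the $A$-torsor $P$ has a rational point, so I fix an isomorphism $P\cong A$ and write $D=\mathrm V(s)\subset A$, a smooth ample divisor with $\mathcal O_A(D)\cong L$, $\dim\mathrm H^0(A,L)=d$, and $\mathrm T_A\cong\mathcal O_A^{\oplus g}$. The proof has two stages: translating the left-hand side of $(\ref{tan})$ into sheaf cohomology on $A$, and then a cohomological comparison.

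\textit{Translation.} The forgetful morphism $(P,L,s)\mapsto(P,\mathrm V(s))$ is an equivalence between $\mathcal{AH}^{\mathrm{sm}}_{g,d}$ and the stack of pairs $(P,D)$, with $P$ para-abelian of relative dimension $g$ and $D\subset P$ an $S$-smooth relatively ample divisor of degree $d$: for a fixed such $(P,D)$ the groupoid of pairs $(L,s)$ with $\mathrm V(s)=D$ is equivalent to a point. Hence $\mathrm T_{\mathcal{AH}^{\mathrm{sm}}_{g,d,\mathbb Q}}((A,P,L,s))$ is the space of first-order deformations of the pair $(A,D)$ — over the Artinian ring $k[\varepsilon]$ a deformation of $A$ is automatically para-abelian — and so, by the deformation theory of a smooth divisor in a smooth variety, equals $\mathrm H^1(A,\mathrm T_A\langle D\rangle)$, where $\mathrm T_A\langle D\rangle\subset\mathrm T_A$ is the subsheaf of derivations preserving the ideal sheaf of $D$. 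Under this identification $(\ref{tan})$ is $\mathrm H^1$ of the restriction map $\alpha\colon\mathrm T_A\langle D\rangle\to i_*\mathrm T_D$ (with $i\colon D\hookrightarrow A$) sending a vector field tangent to $D$ to its restriction along $D$, and $\mathrm T_{\mathcal{CP}}(\mathrm V(s))=\mathrm H^1(D,\mathrm T_D)$.

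\textit{Comparison.} I will use the exact sequences $0\to\mathrm T_A\langle D\rangle\to\mathrm T_A\to i_*N_{D/A}\to0$ and $0\to\mathrm T_A\otimes L^{-1}\to\mathrm T_A\to i_*(\mathrm T_A|_D)\to0$ on $A$, and the conormal sequence $0\to\mathrm T_D\to\mathrm T_A|_D\to N_{D/A}\to0$ on $D$, where $N_{D/A}\cong L|_D$. The map $\alpha$, the restriction $\mathrm T_A\to i_*(\mathrm T_A|_D)$ and the identity on $i_*N_{D/A}$ assemble into a morphism from the first sequence to the pushforward of the third; taking cohomology yields a commutative ladder with exact rows
\[
\begin{array}{ccccccccc}
\mathrm H^0(\mathrm T_A)&\to&\mathrm H^0(N_{D/A})&\to&\mathrm H^1(\mathrm T_A\langle D\rangle)&\to&\mathrm H^1(\mathrm T_A)&\to&\mathrm H^1(N_{D/A})\\[4pt]
\downarrow&&\Vert&&\downarrow&&\downarrow&&\Vert\\[4pt]
\mathrm H^0(\mathrm T_A|_D)&\to&\mathrm H^0(N_{D/A})&\to&\mathrm H^1(\mathrm T_D)&\to&\mathrm H^1(\mathrm T_A|_D)&\to&\mathrm H^1(N_{D/A})
\end{array}
\]
(cohomology on $A$ in the top row, on $D$ in the bottom), in which the middle vertical arrow is $(\ref{tan})$. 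Since $\mathrm T_A$ is free, the second exact sequence together with the vanishing $\mathrm H^j(A,L^{-1})=0$ for $j<g$ shows that the leftmost vertical arrow is an isomorphism, that the fourth is injective for $g\ge2$, and that the fourth is an isomorphism when $g\ge3$ (then also $\mathrm H^2(A,L^{-1})=0$). Now part (1) follows from the four lemma and part (2) from the five lemma.

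\textit{Where the work is.} The one non-formal ingredient is the translation step: proving that $(P,L,s)\mapsto(P,\mathrm V(s))$ is an equivalence of stacks, and then — through the deformation theory of the pair $(A,D)$, taking care that the ambient space is constrained to be para-abelian — identifying the source of $(\ref{tan})$ with $\mathrm H^1(A,\mathrm T_A\langle D\rangle)$ and $(\ref{tan})$ itself with $\mathrm H^1(\alpha)$. Once this is in place the comparison is a mechanical diagram chase whose only geometric inputs are $N_{D/A}\cong L|_D$ and the vanishing of $\mathrm H^j(A,L^{-1})$ for $j<g$. (For part (1) there is also a more geometric argument: a first-order deformation of $(A,D)$ inducing the trivial deformation of $D$ factors, via the Albanese $\mathrm{Alb}_D\to A$, through a trivial family, and rigidity of homomorphisms and of abelian subschemes then forces it to be trivial; but since the diagram chase also yields part (2), I would run it throughout.)
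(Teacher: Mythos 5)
Your proposal is correct and follows essentially the same route as the paper: identify the source of (\ref{tan}) with $\mathrm{H}^1(P,T_P\langle D\rangle)$ via the deformation theory of the pair (para-abelian being automatic for first-order deformations), and reduce everything to Mumford's vanishing for non-degenerate line bundles on abelian varieties together with the triviality of $T_P$. The only difference is cosmetic: the paper extracts the column $0\to T_P(-D)\to T_P\langle D\rangle\to\iota_*T_D\to 0$ from the same $3\times 3$ diagram and reads off both parts from its long exact sequence, whereas you chase the two rows with the four/five lemma; the vanishing input ($\mathrm{H}^j(A,L^{-1})=0$ for $j<g$, equivalently $\mathrm{H}^{g-j}(A,L)=0$) is identical.
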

\begin{proof}
Let $D:=\mathrm{V}(s)$, and note that $\iota:D \to  P$ is a smooth ample hypersurface. We let $ N_{D/P}$ be the normal bundle of $\iota:D\subset P$ on $D$.  By \cite[Proposition~3.4.17]{sernesidef}, the tangent space $
\mathrm{T}_{\mathcal{AH}_{g,d,\mathbb{Q}}^{\text{sm}}}((A,P,L,s))$ is naturally identified with $\mathrm{H}^1(P, T_P\langle D\rangle)$, where $T_P\langle D \rangle$ is defined to be the kernel in the short exact sequence
\[
0\to T_P\langle D \rangle \to T_P \to \iota_\ast N_{D/P} \to 0.
\]  Similarly,  the tangent space  $\mathrm{T}_{\mathcal{CP}}(D)$ can be identified with $\mathrm{H}^1(D,T_D)$. We set $T_P(-D) = T_P \otimes \mathcal{O}_P(-D)$, then the morphism  
\[
\mathrm{T}_{\mathcal{AH}_{g,d,\mathbb{Q}}^{\text{sm}}}((A,P,L,s))\to  \mathrm{T}_{\mathcal{CP}}(\mathrm{V}(s)) \]  on tangent spaces is given by $\mathrm{H}^1(d)$ induced by the diagram
\[
\xymatrix{     & &  0 \ar[d]  & &  0 \ar[d] & &   &     \\ 
  0 \ar[rr] & &  T_P(-D)\  \ar[d]  \ar[rr] & &  T_P(-D) \ar[d] \ar[rr] & & 0     \ar[d] &       \\
   0 \ar[rr] & &  T_P\langle D \rangle    \ar[d]_{d}  \ar[rr] & &  T_P \ar[d] \ar[rr] & & \iota_\ast N_{D/P}  \ar[r]   \ar[d] &  0     \\
    0 \ar[rr] & &  \iota_\ast T_D    \ar[d] \ar[rr] & &  \iota_\ast T_P|_D \ar[d] \ar[rr] & & \iota_\ast N_{D/P} \ar[r] \ar[d] &  0    \\
        & &   0  & &   0  & &  0   &      }
\]
Since the above diagram induces, for every $i\geq 1$,   an exact sequence
\[\xymatrix{
\mathrm{H}^i(P, T_P(-D)) \ar[r]  &  \mathrm{H}^{i}(P, T_P\langle D \rangle )  \ar[rr]^{\mathrm{H}^i(d)}  & & \mathrm{H}^i(D, T_D) \ar[r]  &  \mathrm{H}^{i+1}(T_P(-D)),}
\]   the result   follows from  the fact that $\dim \mathrm{H}^1(P, T_P(-D)) = g\cdot \dim \mathrm{H}^{g-1}(P, \mathcal{O}_P(D)) =0$ for $g=\dim P \geq 2$  and $\dim \mathrm{H}^2(P,T_P(-D)) = g\cdot \dim \mathrm{H}^{g-2}(P, \mathcal{O}_P(D)) =0$ for $g\geq 3$ (see \cite[p.~150]{MumAb}) because $\mathcal{O}_P(D)$ is non-degenerate and effective. \end{proof}

 Since a morphism of finite type separated Deligne-Mumford stacks is unramified if and only if it is injective on tangent spaces (use  \cite[Tag~0B2G]{stacks-project}), we obtain the following consequence of Proposition \ref{prop:tangent}.

\begin{corollary}\label{prop:qf0}  The morphism
$\mathcal{AH}_{g,d, \mathbb{Q}}^{\text{sm}} \to \mathcal{CP}$ is unramified. \qed
\end{corollary}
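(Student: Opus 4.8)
The plan is to deduce this at once from Proposition~\ref{prop:tangent}(1) by means of the infinitesimal criterion for unramifiedness. Recall that a morphism of algebraic stacks which is locally of finite type is unramified exactly when it is formally unramified, and that for a morphism of Deligne-Mumford stacks this can be tested on tangent spaces: the morphism $f$ is unramified if and only if, for every field $k$ and every $k$-point $x$, the induced map $\mathrm{T}(x)\to\mathrm{T}(f(x))$ on tangent spaces is injective (this is the content of \cite[Tag~0B2G]{stacks-project}).

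First I would put the hypotheses in place. By Proposition~\ref{prop:basic_properties}, $\mathcal{AH}_{g,d}^{\text{sm}}$ is a finite type algebraic stack with finite diagonal over $\mathbb{Z}$; in particular its diagonal is unramified, so $\mathcal{AH}_{g,d,\mathbb{Q}}^{\text{sm}}$ is a finite type separated Deligne-Mumford stack over $\mathbb{Q}$. The target $\mathcal{CP}$ is the disjoint union of the finite type separated Deligne-Mumford stacks $\mathcal{CP}_h$ over $\mathbb{Q}$; since $\mathcal{AH}_{g,d,\mathbb{Q}}^{\text{sm}}$ is quasi-compact its image meets only finitely many of the $\mathcal{CP}_h$, so we are reduced to a morphism of finite type separated Deligne-Mumford stacks over $\mathbb{Q}$ (alternatively, one could first pass to a morphism of algebraic spaces using the representability of $\mathcal{AH}_{g,d,\mathbb{Q}}^{\text{sm}}\to\mathcal{CP}$ from Lemma~\ref{lem:reps}).

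Then I would apply Proposition~\ref{prop:tangent}(1): working over $\mathbb{Q}$, every residue field of a point is of characteristic zero, so for every such point the map (\ref{tan}) comparing $\mathrm{T}_{\mathcal{AH}_{g,d,\mathbb{Q}}^{\text{sm}}}((A,P,L,s))$ with $\mathrm{T}_{\mathcal{CP}}(\mathrm{V}(s))$ is injective; by the criterion recalled above, $\mathcal{AH}_{g,d,\mathbb{Q}}^{\text{sm}}\to\mathcal{CP}$ is unramified. I do not expect any genuine obstacle here: all the substance sits in Proposition~\ref{prop:tangent}, whose injectivity statement rests on the vanishing $\mathrm{H}^1(P,T_P(-D))=0$ for the non-degenerate effective class $\mathcal{O}_P(D)$ with $\dim P=g\geq 2$, and the corollary is a formal consequence. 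The only point deserving a moment's care is checking that the tangent-space criterion is quoted in the right generality --- morphisms of Deligne-Mumford stacks, with tangent spaces taken over arbitrary (not necessarily algebraically closed) base fields --- which is standard.
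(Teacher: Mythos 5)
Your argument is exactly the one the paper uses: it deduces the corollary from Proposition~\ref{prop:tangent}(1) via the criterion that a morphism of finite type separated Deligne--Mumford stacks is unramified if and only if it is injective on tangent spaces, citing \cite[Tag~0B2G]{stacks-project}. Your additional checks on the hypotheses are fine and do not change the substance.
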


	 Since unramified morphisms of finite type are quasi-finite \cite[Tag~06PU]{stacks-project}, we obtain the following useful consequence from Corollary \ref{prop:qf0}.  
 
\begin{corollary}\label{cor:qf} The morphism
$\mathcal{AH}_{g,d, \mathbb{Q}}^{\text{sm}} \to \mathcal{CP}$ is quasi-finite. \qed
\end{corollary}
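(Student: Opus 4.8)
The plan is essentially a one-line deduction, so the ``proof'' here is really the chain of logical dependencies that precedes it, and the thing to explain is how the pieces assemble. The final statement, Corollary \ref{cor:qf}, asserts that $\mathcal{AH}_{g,d,\mathbb{Q}}^{\text{sm}}\to\mathcal{CP}$ is quasi-finite. The only input needed beyond what is already proved is the general fact that an unramified morphism of finite type is quasi-finite, which is \cite[Tag~06PU]{stacks-project}; combined with Corollary \ref{prop:qf0} (the morphism is unramified), this is immediate. So the entire content is: cite the previous corollary, cite the Stacks Project tag, done.

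That said, I would want to make sure the hypotheses of the cited general fact are in place. Quasi-finiteness is a statement about fibres being finite discrete, and ``unramified $+$ locally of finite type $\Rightarrow$ quasi-finite'' holds for morphisms of schemes, and hence (working \'etale-locally on source and target via presentations) for morphisms of algebraic stacks whose source and target are locally of finite type. Here $\mathcal{AH}_{g,d,\mathbb{Q}}^{\text{sm}}$ is of finite type over $\mathbb{Q}$ by Proposition \ref{prop:basic_properties}, and $\mathcal{CP}$ is (locally) of finite type over $\mathbb{Q}$ since each $\mathcal{CP}_h$ is of finite type \cite{Viehweg06}; a morphism between algebraic stacks of finite type over a field is itself of finite type, so the morphism is in particular locally of finite type. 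Hence the hypotheses of \cite[Tag~06PU]{stacks-project} are met and the deduction goes through.

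Therefore I would simply write: by Corollary \ref{prop:qf0}, the morphism $\mathcal{AH}_{g,d,\mathbb{Q}}^{\text{sm}}\to\mathcal{CP}$ is unramified, and it is of finite type since both source and target are of finite type over $\mathbb{Q}$; since unramified morphisms of finite type are quasi-finite \cite[Tag~06PU]{stacks-project}, the claim follows. There is no real obstacle here — the genuine work was already done in Proposition \ref{prop:tangent} and Corollary \ref{prop:qf0} (the cohomology vanishing $\mathrm{H}^1(P,T_P(-D))=0$ via the non-degeneracy of $\mathcal{O}_P(D)$), and this corollary is just the packaging of ``unramified'' into the slightly weaker but more convenient ``quasi-finite'' needed for the later application of \cite[Theorem~1.4]{JSZ}. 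The one thing I would double-check while writing is that ``quasi-finite'' is being used in the sense compatible with the citation in Step (2)--(3) of the outline (finite discrete fibres), rather than ``quasi-finite and separated'' or ``finite'', since the subsequent argument only needs quasi-finiteness to feed into the Persistence Conjecture machinery for the atlas $U$.

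Indeed, the corollary is stated with a \verb|\qed| at the end of its statement line in the excerpt, signalling that the authors themselves regard the proof as the trivial remark above — so my proposed proof coincides with theirs: invoke Corollary \ref{prop:qf0} together with \cite[Tag~06PU]{stacks-project}.
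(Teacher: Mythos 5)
Your proposal is correct and coincides with the paper's own argument: the corollary is deduced in one line from Corollary \ref{prop:qf0} together with the fact that unramified morphisms of finite type are quasi-finite \cite[Tag~06PU]{stacks-project}. Your additional verification that both source and target are of finite type over $\mathbb{Q}$ is a reasonable sanity check but adds nothing beyond what the paper implicitly uses.
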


 \subsection{The second proof of Proposition \ref{prop:uniff}}\label{section33}
	Recall that  Proposition \ref{prop:uniff} says that the stack $\mathcal{AH}_{g,d,\mathbb{Z}[1/d]}^{\text{sm}}$ is  uniformisable.  In this  section we reprove this over $\mathbb{Q}$ by adding level structure to the hypersurface $\mathrm{V}(s)$ associated to $(A,P,L,s)$ in $\mathcal{AH}_{g,d,\mathbb{Q}}^{\text{sm}}$.  
	
	We first record the following well-known lemma concerned with  the action of an automorphism of an abelian variety on  its (singular) cohomology. Note that, to simplify the notation, we will omit writing $X^{\an}$ instead of $X$, so that, for example,    $\mathrm{H}^\ast(X,\mathbb{Z})$ denotes the singular $\mathbb{Z}$-cohomology of $X^{\an}$.
	
	\begin{lemma}\label{lem:abvar} Let $A$ be an abelian variety over $\mathbb{C}$, and let $\sigma:A\to A$ be an isomorphism of $\mathbb{C}$-schemes which acts   trivially on $\mathrm{H}^1(A,\mathbb{C})$. Then, there is an element $a\in A(\mathbb{C})$ such that $\sigma$ is given by translation by $a$.
	\end{lemma}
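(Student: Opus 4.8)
The statement is a rigidity property: an automorphism of $A$ acting trivially on $\mathrm{H}^1$ must be a translation. The plan is to reduce to the classical fact that a group automorphism of $A$ (i.e.\ one fixing the origin) that is trivial on $\mathrm{H}^1(A,\mathbb{Z})$ is the identity, and then peel off the translation part.

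First I would write $\sigma$ in the standard way as the composition of a translation and a group homomorphism. Let $a := \sigma(0) \in A(\mathbb{C})$ and set $\tau := t_{-a}\circ \sigma$, where $t_b$ denotes translation by $b$; then $\tau$ is an automorphism of the variety $A$ fixing the origin, hence (by the classical rigidity lemma for abelian varieties, e.g.\ \cite[p.~43]{MumAb}) $\tau$ is an automorphism of $A$ as a group scheme. The goal is now to show $\tau = \mathrm{id}_A$, which by the stated hypothesis will follow once I know that $\tau$ acts trivially on $\mathrm{H}^1(A,\mathbb{C})$.

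The key observation is that translations act trivially on cohomology: for any $b\in A(\mathbb{C})$ the map $t_b$ is homotopic to the identity on $A^{\mathrm{an}}$ (it lies in the connected group $A(\mathbb{C})$ acting on itself), so $t_b^\ast = \mathrm{id}$ on $\mathrm{H}^\ast(A,\mathbb{Z})$, and in particular on $\mathrm{H}^1(A,\mathbb{C})$. Hence $\tau^\ast = (t_{-a}\circ\sigma)^\ast = \sigma^\ast\circ t_{-a}^\ast = \sigma^\ast$ on $\mathrm{H}^1(A,\mathbb{C})$, which is the identity by hypothesis. So $\tau$ is a group automorphism acting trivially on $\mathrm{H}^1(A,\mathbb{C})\cong \mathrm{H}^1(A,\mathbb{Z})\otimes\mathbb{C}$, and therefore trivially on $\mathrm{H}^1(A,\mathbb{Z})$ (the natural map $\mathrm{H}^1(A,\mathbb{Z})\to \mathrm{H}^1(A,\mathbb{C})$ is injective since $\mathrm{H}^1(A,\mathbb{Z})$ is torsion-free).

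To finish, I would invoke the classical equivalence between abelian varieties over $\mathbb{C}$ and their first homology (or the dual statement for $\mathrm{H}^1$): writing $A(\mathbb{C}) = V/\Lambda$ with $\Lambda = \mathrm{H}_1(A,\mathbb{Z})$, a group homomorphism $\tau$ is determined by its action on $\Lambda$, equivalently on $\mathrm{H}^1(A,\mathbb{Z}) = \mathrm{Hom}(\Lambda,\mathbb{Z})$; since $\tau$ acts as the identity there, $\tau = \mathrm{id}_A$. Therefore $\sigma = t_a\circ\tau = t_a$ is translation by $a$, as claimed. I do not expect any serious obstacle here; the only point requiring a little care is being precise about which cohomology-theoretic input is used — namely the torsion-freeness of $\mathrm{H}^1(A,\mathbb{Z})$ and the faithfulness of the representation of $\mathrm{End}(A)$ on $\mathrm{H}_1(A,\mathbb{Z})$ — both of which are standard for complex abelian varieties.
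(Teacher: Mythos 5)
Your proof is correct and follows essentially the same route as the paper: decompose $\sigma$ as a translation composed with a group automorphism, note that translations act trivially on cohomology, and conclude via the faithfulness of the action of $\mathrm{End}(A)$ on $\mathrm{H}^1$ (the paper cites \cite[Corollary~II.1]{MumAb} for the decomposition and the injection $\mathrm{End}(A)\hookrightarrow\mathrm{End}(\mathrm{H}^1(A,\mathbb{C}))$ for the last step, whereas you phrase the final step via the lattice $\Lambda=\mathrm{H}_1(A,\mathbb{Z})$, but these are the same standard fact).
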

	\begin{proof}  
	By \cite[Corollary~II.1]{MumAb}, there is an element $a$ in $A(\mathbb{C})$ and an isomorphism $f:A\to A$ such that, for every $x$ in $A(\mathbb{C})$, we have that $\sigma(x) = f(x) + a$.  Since translations act trivially on cohomology and     $\sigma$  acts trivially on $\mathrm{H}^1(A,\mathbb{C})$, we see that  $f$ acts trivially on $\mathrm{H}^1(A,\mathbb{C})$. In particular,  $f-\mathrm{id}_A$ acts as zero on $\mathrm{H}^1(A,\mathbb{C})$. Since    $\mathrm{End}(A)$ injects into $\mathrm{End}(\mathrm{H}^1(A,\mathbb{C}))$   \cite[p.~175-177]{MumAb}, we conclude that   $f$ is the identity map $\mathrm{id}_A:A\to A$, as required.
	\end{proof}
	
	In fact, as Will Sawin explained to us, the automorphism group of a smooth projective variety of general type which embeds into its Albanese, acts faithfully on its (singular) cohomology. 
	
	\begin{lemma} \label{lem:sawin}
	Let $X$ be a smooth projective variety of general type over $\CC$ which embeds into an abelian variety.   Then $\mathrm{Aut}(X)$ acts faithfully on $\mathrm{H}^*(X,\mathbb{C})$.
	\end{lemma}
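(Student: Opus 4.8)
The plan is to reduce the faithfulness statement to Lemma~\ref{lem:abvar} by using the universal property of the Albanese together with the fact that $X$ embeds into \emph{some} abelian variety. Since $X$ is of general type and embeds into an abelian variety, the Albanese map $\alpha\colon X\to \mathrm{Alb}(X)$ is a closed immersion: indeed, any embedding $X\hookrightarrow B$ into an abelian variety factors through $\alpha$ via a morphism $\mathrm{Alb}(X)\to B$ (after translating $B$ so that the composite is a homomorphism up to translation), and since $X\hookrightarrow B$ is a closed immersion the first map $\alpha$ must be one as well. So first I would fix a base point and identify $\mathrm{Alb}(X)$ with an abelian variety $A$ containing $X$ as a closed subvariety.

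Next I would use functoriality of the Albanese: an automorphism $\sigma\colon X\to X$ induces an automorphism $\mathrm{Alb}(\sigma)\colon A\to A$ compatible with $\alpha$, i.e. $\mathrm{Alb}(\sigma)\circ\alpha = \alpha\circ\sigma$. Suppose $\sigma$ acts trivially on $\mathrm{H}^*(X,\mathbb{C})$; in particular it acts trivially on $\mathrm{H}^1(X,\mathbb{C})$. Since $\alpha^*\colon \mathrm{H}^1(A,\mathbb{C})\to \mathrm{H}^1(X,\mathbb{C})$ is an isomorphism (this is a standard property of the Albanese for a projective manifold — $\mathrm{H}^1$ of $X$ is generated by $\mathrm{H}^1$ of its Albanese), and $\alpha^*\circ \mathrm{Alb}(\sigma)^* = \sigma^*\circ\alpha^* = \alpha^*$ on $\mathrm{H}^1$, it follows that $\mathrm{Alb}(\sigma)^*$ acts trivially on $\mathrm{H}^1(A,\mathbb{C})$. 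By Lemma~\ref{lem:abvar}, $\mathrm{Alb}(\sigma)$ is translation by some $a\in A(\mathbb{C})$.

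Finally I would rule out a nontrivial translation. If $\mathrm{Alb}(\sigma)$ is translation by $a$, then from $\mathrm{Alb}(\sigma)\circ\alpha=\alpha\circ\sigma$ we get that translation by $a$ maps the closed subvariety $X\subset A$ into itself; since $\sigma$ is an automorphism of $X$, in fact translation by $a$ maps $X$ isomorphically onto $X$, so $t_a(X)=X$. Then the stabilizer subgroup scheme $\mathrm{Stab}_A(X)=\{b\in A : t_b(X)=X\}$ is a closed subgroup of $A$ containing $a$; being a closed subgroup scheme of an abelian variety it is itself an extension of an abelian variety by a finite group. But $X$ is of general type, so it cannot contain the orbits of a positive-dimensional subgroup, and thus (arguing on the quotient, or directly: a positive-dimensional $\mathrm{Stab}_A(X)^0$ would force $X$ to be a union of translates of an abelian subvariety, contradicting general type) the identity component $\mathrm{Stab}_A(X)^0$ is trivial, so $\mathrm{Stab}_A(X)$ is finite. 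This alone does not immediately give $a=0$, so to conclude faithfulness one instead combines the whole action: the kernel of $\mathrm{Aut}(X)\to \mathrm{GL}(\mathrm{H}^*(X,\mathbb{C}))$ consists of automorphisms $\sigma$ with $\mathrm{Alb}(\sigma)=t_a$ for $a\in\mathrm{Stab}_A(X)$, and such $\sigma$ is determined by $a$ (since $\alpha$ is injective and $\sigma = \alpha^{-1}\circ t_a\circ\alpha$ on the image); hence this kernel is a finite group acting by translations. The hard part is then the final step: showing this finite group of translations acts trivially on $\mathrm{H}^*(X,\mathbb{C})$ forces it to be trivial — one uses that a nontrivial finite-order translation $t_a$ of $A$ preserving $X$ acts nontrivially on $\mathrm{H}^*(X,\mathbb{C})$ because it acts freely on $X$ (translations are fixed-point free), so by the Lefschetz fixed point formula the Lefschetz number of $t_a|_X$ is $0\neq \chi(X)$ — wait, this needs $\chi(X)\neq 0$, which holds for varieties of general type with an embedding into an abelian variety by a theorem of Popa–Schnell (generic vanishing) — so $t_a|_X$ cannot act as the identity on cohomology. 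I expect this last input (nonvanishing of the topological Euler characteristic, or equivalently freeness of the translation action combined with Lefschetz) to be the main obstacle and the place where the hypothesis ``general type'' together with ``embeds into an abelian variety'' is genuinely used.
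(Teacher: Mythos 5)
Your proof is correct and follows essentially the same route as the paper: embed $X$ into its Albanese, use functoriality and Lemma~\ref{lem:abvar} to show the induced automorphism of the Albanese is a translation, and then rule out a nontrivial translation via the Lefschetz fixed point formula together with the nonvanishing of the topological Euler characteristic of a general type subvariety of an abelian variety (the paper cites the appendix of Debarre--Jiang--Lahoz for this last input, which is the same fact you attribute to Popa--Schnell). The digression about the finiteness of the stabilizer $\mathrm{Stab}_A(X)$ is superfluous, as you yourself note, but harmless.
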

	\begin{proof}  Let $\sigma$ be an automorphism of $X$ such that $\sigma$ acts trivially on $\mathrm{H}^\ast(X,\mathbb{C})$. We   show that $\sigma$ is the identity.
	
	 First, by \cite[Appendix, Theorem~1]{DebarreJiangLahoz}, since $X$ is of general type, the topological Euler characteristic $e(X) \neq 0$. Since $\sigma\in\mathrm{Aut}(X)$ acts trivially on the entire cohomology $\mathrm{H}^\ast(X,\mathbb{C})$,   the Lefschetz trace formula together with $e(X)\neq 0$ implies that $\sigma$ has a fixed point (see    \cite[III.4.11.4, p.111]{SGA5}). 
	 
	 On the other hand, since $X$ embeds into an abelian variety, we have that $X$ embeds into its Albanese variety $\mathrm{Alb}_{X/\mathbb{C}}$. In particular, $\sigma$ induces an automorphism $\psi:\mathrm{Alb}_{X/\mathbb{C}}\to \mathrm{Alb}_{X/\mathbb{C}}$ and we have a commutative diagram
	\[ 
\xymatrix{ X \ar[d]_{\textrm{inclusion}} \ar[rr]^{\sigma} & & X \ar[d]^{\textrm{inclusion}}  \\ \mathrm{Alb}_{X/\mathbb{C}} \ar[rr]^{\psi} & & \mathrm{Alb}_{X/\mathbb{C}}}
	\] Since   $\sigma$ acts trivially on $\mathrm{H}^*(X,\mathbb{C})$, it acts trivially on $\mathrm{H}^1(X,\mathbb{C})$. In particular, since $\mathrm{H}^1(X,\mathbb{C}) = \mathrm{H}^1(\mathrm{Alb}_{X/\mathbb{C}},\mathbb{C})$, it follows that   $\psi$ acts trivially on $\mathrm{H}^1(\mathrm{Alb}_{X/\mathbb{C}},\mathbb{C})$.    By Lemma \ref{lem:abvar}, this implies that  $\psi$ is given by  translation on $\mathrm{Alb}_{X/\mathbb{C}}$.

	Since $\sigma$ has a fixed point, the same holds for $\psi$. However, because $\psi$ is a translation it must be the identity. By the above commutative diagram, we conclude that $\sigma$ is the identity, as required.   
	\end{proof}

	\begin{lemma} \label{lem:sawin100} Let $S$ be a finite type $\mathbb{C}$-scheme and let $X \to S$ be a smooth proper morphism whose geometric fibres are canonically polarized varieties.  Suppose that, for every $s$ in $S(\mathbb{C})$, the group $\mathrm{Aut}(X_s)$ acts faithfully on $\mathrm{H}^*(X_s,\mathbb{C})$. Then, there is an integer $\ell_0 \geq 3$ such that, for every prime number $\ell \geq \ell_0$ and every $s$ in $S(\mathbb{C})$, the action of $\mathrm{Aut}(X_s)$ on $\mathrm{H}^*(X_{s},\mathbb{Z}/\ell\mathbb{Z})$ is faithful. \end{lemma}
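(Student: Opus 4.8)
The plan is to reduce the assertion to a pointwise statement about finite subgroups of $\mathrm{GL}_N(\mathbb{Z})$ and then apply Minkowski's lemma, obtaining in fact the explicit value $\ell_0=3$. Fix $s\in S(\mathbb{C})$, write $X:=X_s$ and $G:=\mathrm{Aut}(X)$; since $X$ is canonically polarized, $G$ is finite (the stack $\mathcal{CP}$ has finite diagonal, cf.\ the proof of Lemma~\ref{lem:reps}). The group $G$ acts, preserving the grading, on the finitely generated abelian group $\mathrm{H}^\ast(X,\mathbb{Z})$; let $\Lambda:=\mathrm{H}^\ast(X,\mathbb{Z})/(\mathrm{torsion})$, a free $\mathbb{Z}$-module of rank $N:=\sum_i\dim\mathrm{H}^i(X,\mathbb{C})$, and let $\rho\colon G\to\mathrm{GL}(\Lambda)\cong\mathrm{GL}_N(\mathbb{Z})$ be the induced representation. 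As $\mathrm{H}^\ast(X,\mathbb{C})=\Lambda\otimes_{\mathbb{Z}}\mathbb{C}$, the hypothesis that $G$ acts faithfully on $\mathrm{H}^\ast(X,\mathbb{C})$ is exactly the injectivity of $\rho$; so it suffices to prove that, for $\ell\geq 3$, any $\sigma\in G$ acting trivially on $\mathrm{H}^\ast(X,\mathbb{Z}/\ell\mathbb{Z})$ has $\rho(\sigma)=\mathrm{id}$.

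The key step is that triviality of the $\mathbb{Z}/\ell\mathbb{Z}$-action forces $\rho(\sigma)\equiv\mathrm{id}\pmod{\ell}$. For each $i$ the universal coefficient sequence
\[
0\longrightarrow \mathrm{H}^i(X,\mathbb{Z})\otimes\mathbb{Z}/\ell\mathbb{Z}\longrightarrow \mathrm{H}^i(X,\mathbb{Z}/\ell\mathbb{Z})\longrightarrow \mathrm{Tor}_1^{\mathbb{Z}}\!\big(\mathrm{H}^{i+1}(X,\mathbb{Z}),\mathbb{Z}/\ell\mathbb{Z}\big)\longrightarrow 0
\]
is natural, hence $G$-equivariant. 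Thus a $\sigma$ acting trivially on $\mathrm{H}^i(X,\mathbb{Z}/\ell\mathbb{Z})$ acts trivially on the subgroup $\mathrm{H}^i(X,\mathbb{Z})\otimes\mathbb{Z}/\ell\mathbb{Z}$, and a fortiori on its quotient $\Lambda^i/\ell\Lambda^i$ (with $\Lambda^i:=\mathrm{H}^i(X,\mathbb{Z})/(\mathrm{torsion})$). Summing over $i$, $\rho(\sigma)$ lies in the congruence kernel $\ker\!\big(\mathrm{GL}_N(\mathbb{Z})\to\mathrm{GL}_N(\mathbb{Z}/\ell\mathbb{Z})\big)$.

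To conclude I would invoke Minkowski's lemma: for every integer $\ell\geq 3$ this congruence kernel is torsion-free (equivalently, every finite subgroup of $\mathrm{GL}_N(\mathbb{Z})$ embeds into $\mathrm{GL}_N(\mathbb{Z}/\ell\mathbb{Z})$). Since $\rho(\sigma)$ has finite order ($G$ being finite), it equals $\mathrm{id}$; hence $\sigma$ acts trivially on $\Lambda$, thus on $\mathrm{H}^\ast(X,\mathbb{C})$, and finally $\sigma=\mathrm{id}_X$ by faithfulness. This gives the lemma with $\ell_0=3$, uniformly in $s$, the finite-type hypothesis on $S$ entering only through the finiteness of each $\mathrm{Aut}(X_s)$. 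If one prefers to avoid Minkowski's lemma, one can instead use the finiteness of $\underline{\mathrm{Aut}}_{X/S}\to S$ to obtain a uniform bound $N_0$ on the orders $\lvert\mathrm{Aut}(X_s)\rvert$, take $\ell_0$ larger than every prime $\leq N_0$, and replace the final step by the elementary remark that a finite-order matrix of order prime to $\ell$ which is $\equiv\mathrm{id}\pmod{\ell}$ is diagonalizable with all eigenvalues equal to $1$, hence is the identity.

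The one point requiring care --- and the crux of the argument --- is the passage between $\mathbb{Z}/\ell\mathbb{Z}$-coefficients and the integral lattice: the torsion in $\mathrm{H}^\ast(X_s,\mathbb{Z})$ may vary with $s$ and be divisible by $\ell$, so $\mathrm{H}^\ast(X_s,\mathbb{Z}/\ell\mathbb{Z})$ cannot be identified with $\Lambda/\ell\Lambda$; using naturality of the universal coefficient sequence to descend to the torsion-free quotient is precisely what makes the uniform (indeed absolute) bound possible.
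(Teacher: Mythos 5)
Your proof is correct, and it differs from the paper's in the one step that actually matters: how the torsion in $\mathrm{H}^\ast(X_s,\mathbb{Z})$ is handled. The paper stratifies $S$ into finitely many pieces over which the integral cohomology is locally constant, chooses $\ell_0$ large enough that no prime $\ell \geq \ell_0$ divides the order of the torsion subgroups, and only then identifies $\mathrm{H}^\ast(X_s,\mathbb{Z}/\ell\mathbb{Z})$ with the reduction of the (now torsion-free) $\mathbb{Z}_\ell$-cohomology; the endgame is the Minkowski--Serre statement that a semisimple finite-order matrix whose eigenvalues are roots of unity congruent to $1$ mod $\ell\geq 3$ is the identity. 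You instead pass to the torsion-free quotient $\Lambda$ of integral cohomology via the naturality of the universal coefficient (Bockstein) sequence: triviality on $\mathrm{H}^i(X_s,\mathbb{Z}/\ell\mathbb{Z})$ forces triviality on the subgroup $\mathrm{H}^i(X_s,\mathbb{Z})\otimes\mathbb{Z}/\ell\mathbb{Z}$ and hence, via the equivariant surjection $\mathrm{H}^i(X_s,\mathbb{Z})\otimes\mathbb{Z}/\ell\mathbb{Z}\twoheadrightarrow\Lambda^i/\ell\Lambda^i$, on $\Lambda/\ell\Lambda$, regardless of what the torsion is; then Minkowski's lemma applies directly in $\mathrm{GL}(\Lambda)$. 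This buys you the absolute constant $\ell_0=3$, uniform in $s$ and independent of $S$ (so the stratification, and indeed the finite-type hypothesis on $S$, become unnecessary), whereas the paper's $\ell_0$ a priori depends on the family. The key arithmetic input is the same lemma of Minkowski(--Serre) in both cases, and your fallback argument (order prime to $\ell$ plus congruence to the identity mod $\ell$ forces all eigenvalues to be $1$) is also valid. Both proofs are sound; yours is sharper and more self-contained.
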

	
	\begin{proof}  Stratifying $S$ by finitely many locally closed subschemes, we may and do assume that $S$ is a smooth integral variety over $\mathbb{C}$. Therefore the (co)homology groups $\mathrm{H}_*(X_s,\mathbb{Z})$ and $\mathrm{H}^*(X_s,\mathbb{Z})$ are all isomorphic as $s$ varies in $S(\mathbb{C})$. Thus, since these groups are finitely generated, there is an integer $\ell_0\geq 3$ such that every prime number $\ell \geq \ell_0$ is prime to the order of the torsion in the groups above. We obtain the following identifications for every such $\ell$ and every $s$ in $S(\mathbb{C})$ from the universal coefficient theorem:
	\[\mathrm{H}^{\ast}(X_s,\mathbb{Z}) \otimes \mathbb{Z}/{\ell}^n\mathbb{Z} \cong \mathrm{H}^{\ast}(X_s,\mathbb{Z}/\ell^n\mathbb{Z})\]
	\[\mathrm{H}^\ast(X_s,\ZZ) \otimes \ZZ_{\ell} \cong \mathrm{H}^\ast(X_s,\ZZ_\ell)\]
	 Moreover, by our choice of $\ell$, the group $\mathrm{H}^{\ast}(X_s,\mathbb{Z}) \otimes \mathbb{Z}_{\ell}$ is torsion-free. 
	
	Let $s\in S(\mathbb{C})$, and suppose that $\sigma \in \mathrm{Aut}(X_s)$  acts trivially on $\mathrm{H}^\ast(X_{s},\ZZ/\ell \ZZ)$, we will show that $\sigma$ is trivial.

	 Since $\mathrm{H}^\ast(X_s,\ZZ_\ell)$ is torsion-free, we may consider the action of $\sigma$ on the free finitely generated $\mathbb{Z}_\ell$-module $\mathrm{H}^\ast(X_s,\ZZ_{\ell})$ as given by     some $\ZZ_\ell$-matrix $A$.
	   Since $A$ acts trivially on $\mathrm{H}^\ast(X_{s},\ZZ/\ell \ZZ)$, we have  that $A \bmod \ell$ is the identity matrix. As $\mathrm{Aut}(X_s)$ is finite, $A$ is semi-simple and the eigenvalues of $A \otimes \bar{\ZZ}_\ell$ are roots of unity congruent to $1 \bmod \ell$, as $A \bmod \ell$ is the identity matrix. Therefore, as $\ell >\ell_0 \geq 3$, a well-known lemma of Minkowski and Serre (see the appendix of \cite{SerreRig}, or the more general \cite[Theorem~6.7]{SilverbergZarhin}) implies that each eigenvalue of $A \otimes \bar{\ZZ}_\ell$ is equal to $1$. Therefore, since $A$ is semi-simple, it is the identity matrix. It follows that  $\sigma$ acts trivially on $\mathrm{H}^\ast(X_s,\ZZ_\ell)$, and therefore also acts trivially on $\mathrm{H}^\ast(X_s,\mathbb{C})$. By our assumption that the group $\mathrm{Aut}(X_s)$ acts faithfully on $\mathrm{H}^\ast(X_s,\mathbb{C})$, it follows that   $\sigma$ is the identity.  
	\end{proof}

We want to identify a substack $\mathcal{CP}^+ \subset \mathcal{CP}$ which parametrizes those smooth proper canonically polarized varieties  which embed into some abelian variety. Equivalently, $\mathcal{CP}^+$ should parametrize exactly those smooth proper canonically polarized varieties which embed into their Albanese variety. Thus, if $\mathcal{U} \to \mathcal{CP}$ is the universal canonically polarized scheme, then $\mathcal{CP}^+$ can be realized as the (open) locus in $\mathcal{CP}$ where $\mathcal{U} \to \mathrm{Alb}_{\mathcal{U}/\mathcal{CP}}$ is a closed immersion (see \cite[9.6.1]{EGAIVII}). In particular, $\mathcal{CP}^+$ is a separated Deligne-Mumford stack over $\mathbb{Q}$.

\begin{theorem}\label{thm:cpplus}
The stack $\mathcal{CP}^+$ is uniformisable.
\end{theorem}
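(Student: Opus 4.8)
The plan is to rigidify $\mathcal{CP}^+$ by adding a level structure, exactly in the spirit of the proof of Proposition~\ref{prop:uniff}: we produce a finite \'etale surjective cover of $\mathcal{CP}^+$ by a stack with trivial automorphism groups, hence by an algebraic space. The geometric input that makes this possible is that automorphisms of objects of $\mathcal{CP}^+$ act faithfully on mod-$\ell$ cohomology for $\ell$ large, which is precisely the content of Lemmas~\ref{lem:sawin} and~\ref{lem:sawin100}. Since $\mathcal{CP}$ is only locally of finite type, I would first write $\mathcal{CP}^+ = \bigsqcup_{h} \mathcal{CP}^+_h$ with $\mathcal{CP}^+_h := \mathcal{CP}^+ \cap \mathcal{CP}_h$, and observe that a countable disjoint union of uniformisable algebraic stacks is uniformisable (take the disjoint union of the atlases; this is again a finite morphism since finiteness may be checked over the open substacks $\mathcal{CP}_h$). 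So it suffices to uniformise a single finite type separated Deligne--Mumford stack $\mathcal{X} := \mathcal{CP}^+_h$; let $\pi : \mathcal{U} \to \mathcal{X}$ be the universal family.

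Fix a prime $\ell$. By smooth proper base change $\mathcal{F}_\ell := \bigoplus_i R^i\pi_\ast \underline{\mathbb{Z}/\ell\mathbb{Z}}$ is a lisse \'etale sheaf on $\mathcal{X}$, and I would choose $\ell$ so that, at every geometric point $x$ of $\mathcal{X}$, the group $\mathrm{Aut}(\mathcal{U}_x)$ acts faithfully on the stalk of $\mathcal{F}_\ell$, i.e.\ on $\mathrm{H}^*(\mathcal{U}_{x},\mathbb{Z}/\ell\mathbb{Z})$. Such an $\ell$ exists: base changing to $\mathbb{C}$ and choosing a finite type atlas $S \to \mathcal{X}_{\mathbb{C}}$, the geometric fibres of $\mathcal{U}|_S \to S$ are canonically polarized varieties that embed into an abelian variety (their Albanese is a torsor under an abelian scheme, hence abelian over an algebraically closed field), so Lemma~\ref{lem:sawin} gives faithfulness of $\mathrm{Aut}$ on their Betti cohomology, and then Lemma~\ref{lem:sawin100} provides an $\ell_0 \ge 3$ such that for all primes $\ell \ge \ell_0$ and all $s \in S(\mathbb{C})$ the action of $\mathrm{Aut}(\mathcal{U}_s)$ on $\mathrm{H}^*(\mathcal{U}_s,\mathbb{Z}/\ell\mathbb{Z})$ is faithful. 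Any geometric point of $\mathcal{X}$ is defined over a finitely generated subfield of $\Qbar$, which we may embed into $\mathbb{C}$; since the finite group $\mathrm{Aut}(\mathcal{U}_x)$ and its action on cohomology are unchanged under extensions of algebraically closed fields, the Artin comparison isomorphism (together with smooth proper base change) transports the faithfulness to the required \'etale statement at all geometric points of $\mathcal{X}$.

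Now fix such an $\ell$. As $\mathcal{X}$ has finitely many connected components, on each component $\mathcal{X}_0$ the lisse sheaf $\mathcal{F}_\ell$ has a well-defined geometric fibre $M$, and I set
\[ \mathcal{X}_0^{[\ell]} := \underline{\mathrm{Isom}}_{\mathcal{X}_0}\!\big(\mathcal{F}_\ell|_{\mathcal{X}_0},\, \underline{M}\big), \qquad \mathcal{X}^{[\ell]} := \bigsqcup_{\mathcal{X}_0} \mathcal{X}_0^{[\ell]}. \]
Because $\mathcal{F}_\ell$ is lisse, $\mathcal{X}_0^{[\ell]} \to \mathcal{X}_0$ is a torsor under the constant finite group scheme $\underline{\mathrm{Aut}(M)}$ which is trivial \'etale-locally on $\mathcal{X}_0$; hence $\mathcal{X}^{[\ell]} \to \mathcal{X}$ is finite, \'etale and surjective, and $\mathcal{X}^{[\ell]}$ is a Deligne--Mumford stack. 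It remains to check that $\mathcal{X}^{[\ell]}$ has trivial automorphism groups: an automorphism of an object $(X \to T, \alpha) \in \mathcal{X}^{[\ell]}(T)$ is an automorphism $\sigma$ of the canonically polarized family $X \to T$ such that $\sigma^\ast$ is the identity on $\mathcal{F}_{\ell,X/T}$, so at each geometric point $t$ of $T$ the automorphism $\sigma_t$ acts trivially on $\mathrm{H}^*(X_{t},\mathbb{Z}/\ell\mathbb{Z})$ and therefore $\sigma_t = \mathrm{id}$ by our choice of $\ell$; since $\underline{\mathrm{Aut}}_{X/T} \to T$ is unramified and separated (the stack $\mathcal{CP}$ being Deligne--Mumford with finite diagonal), a section agreeing with the identity section at every geometric point must equal it, so $\sigma = \mathrm{id}$. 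Thus $\mathcal{X}^{[\ell]}$ has trivial inertia and so is an algebraic space, finite \'etale surjective over $\mathcal{CP}^+_h$, which completes the proof.

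The mathematical content is concentrated entirely in the choice of $\ell$, i.e.\ in Lemmas~\ref{lem:sawin} and~\ref{lem:sawin100} (Sawin's faithfulness statement and the Minkowski--Serre reduction to mod-$\ell$ coefficients); granting these, everything else is a routine use of level structures, smooth proper base change, and the \'etale--singular comparison, the only mild point of care being the passage from $\mathcal{CP}^+$ to the finite type substacks $\mathcal{CP}^+_h$ so that Lemma~\ref{lem:sawin100} applies.
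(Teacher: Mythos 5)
Your proposal is correct and follows essentially the same route as the paper: reduce to the finite type substacks $\mathcal{CP}^+_h$, choose $\ell$ via Lemmas~\ref{lem:sawin} and~\ref{lem:sawin100} so that automorphisms act faithfully on mod-$\ell$ cohomology, and rigidify by a level structure on the entire mod-$\ell$ cohomology to obtain a finite \'etale surjective cover with trivial inertia. Your use of the $\underline{\mathrm{Isom}}$-torsor to a fixed geometric fibre, and your explicit handling of the Betti--\'etale comparison and of the disjoint union over $h$, are only minor packaging differences from the paper's argument.
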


		\begin{proof} For $h\in \mathbb{Q}[t]$,  let $\mathcal{CP}^+_h =\mathcal{CP}^+ \cap \mathcal{CP}_h$ be the stack of canonically polarized varieties $X$ with Hilbert polynomial $h$ and which embed into their Albanese. It suffices to show that $\mathcal{CP}^+_h$ is uniformisable. We will use level $\ell$-structure on the \emph{entire} cohomology (compare with \cite{PoppIII} and \cite{JLlevel}).

  Since $\mathcal{CP}^+_h$ is of finite type over $\mathbb{Q}$, Lemma \ref{lem:sawin} and Lemma \ref{lem:sawin100}  imply that we may choose a prime number $\ell$ such that, for every $X$ in $\mathcal{CP}^+_h(\mathbb{C})$, the action of $\mathrm{Aut}(X)$ on $\mathrm{H}^*(X,\mathbb{Z}/\ell \mathbb{Z})$ is faithful. 
	  Define $(\mathcal{CP}^+_h)^{[\ell]}$ to be the stack over $\mathbb{Q}$ whose objects are tuples $(f:X\to S,\phi_1,\ldots, \phi_{2n})$, where $n:=\deg h$, the morphism $f:X\to S$ is in $\mathcal{CP}^+_h$ and, for every $i=1,\ldots, 2n$, the morphism $\phi_i: R^i f_\ast \mathbb{Z}/\ell\mathbb{Z} \cong (\mathbb{Z}/\ell\mathbb{Z})_S^{b_i}$ is an isomorphism of group schemes over $S$. Here $b_i $ is the $i$-th Betti number of (a geometric fibre of) $X\to S$.  Note that $(\mathcal{CP}^+_h)^{[\ell]}\to  \mathcal{CP}^+_h$ is finite and \'etale, so that $(\mathcal{CP}^+_h)^{[\ell]}$ is a finite type separated algebraic stack over $\mathbb{Q}$. Moreover, since the action of $\mathrm{Aut}(X)$ on $\mathrm{H}^*(X,\mathbb{Z}/\ell \mathbb{Z})$ is faithful for every $\mathbb{C}$-point $X$ of $\mathcal{CP}_h^+$, the stack  $(\mathcal{CP}^+_h)^{[\ell]}$ is an algebraic space, as required.
	\end{proof}
	
	\begin{proof}[Second proof of Proposition \ref{prop:uniff}]
	 The morphism  $\mathcal{AH}_{g,d,\mathbb{Q}}^{\text{sm}} \to \mathcal{CP}$ is representable  (Lemma \ref{lem:reps}), and factors over $\mathcal{CP}^+$. Since $\mathcal{CP}^+$ is uniformisable (Theorem \ref{thm:cpplus}), it follows that $\mathcal{AH}_{g,d,\mathbb{Q}}^{\text{sm}}$ is uniformisable. 
	\end{proof}

	\section{Proof of Main Theorem}
For $\mathcal{G}$ a small groupoid, we let $\pi_0(\mathcal{G})$ denote the set of isomorphism classes of objects of $\mathcal{G}$.
	The following finiteness for the stack of smooth abelian hypersurfaces is the starting point of this paper. This theorem is obtained by combining the results of \cite{Faltings2, lawrence2020shafarevich} with finiteness results for torsors under an abelian scheme; see  \cite[Theorem~6.5]{JLM} for a detailed proof.
 
	\begin{theorem} \label{thm:sawinhyp} 
	If $d \geq 1$,  $g \geq 4$,    and $S$ is a finite set of finite places of a number field $K$, then  $\pi_0(\mathcal{AH}_{g,d}^{\text{sm}}(\OO_{K,S}))$ is finite. \qed
	\end{theorem}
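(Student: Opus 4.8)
The plan is to establish finiteness by peeling off, one at a time, the three layers of data in a triple $(\pi\colon P\to\Spec\OO_{K,S},L,s)$, following the morphisms $\mathcal{AH}_{g,d}^{\text{sm}}\to\mathcal{T}_{g,d}\to\mathcal{A}_{g,d}$ of Remark~\ref{preambleuni}: first the underlying polarized abelian scheme, then the para-abelian space $P$ (equivalently, the torsor under it, by Proposition~\ref{prop:abtorsor}), and finally the hypersurface $\mathrm{V}(s)$ itself. Throughout, write $A$ for the abelian scheme under $P$ and $\lambda=\lambda_L\colon A\to A^\vee$ for the associated polarization of degree $d$.

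\textbf{Steps 1 and 2: bounding $(A,\lambda)$ and then $(P,L)$.} First I would observe that the image of $\pi_0(\mathcal{AH}_{g,d}^{\text{sm}}(\OO_{K,S}))$ in $\pi_0(\mathcal{A}_{g,d}(\OO_{K,S}))$ is finite; this is Faltings' theorem \cite{Faltings2}, since Zarhin's trick reduces the finiteness of $g$-dimensional abelian schemes over $\OO_{K,S}$ carrying a degree $d$ polarization to the principally polarized case, and $\NS(A)$ being finitely generated leaves only finitely many degree $d$ polarizations on each $A$. Next, fixing $(A,\lambda)$ in this finite set, I would bound the pairs $(P,L)$ lying over it. By \cite[Proposition~5.1.4]{abelianolsson} the morphism $\mathcal{T}_{g,d}\to\mathcal{A}_{g,d}$ is a gerbe whose band is the theta group $\mathcal{G}_{(A,P,L)}$, which fits into $1\to\Gm\to\mathcal{G}_{(A,P,L)}\to H(\lambda)\to1$ with $H(\lambda)=\ker\lambda$ finite locally free of degree $d^2$. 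Hence the fibre of $\pi_0(\mathcal{T}_{g,d}(\OO_{K,S}))\to\pi_0(\mathcal{A}_{g,d}(\OO_{K,S}))$ over $(A,\lambda)$ is controlled by $\HH^1(\Spec\OO_{K,S},\mathcal{G}_{(A,P,L)})$, which is finite: the exact sequence of the displayed central extension exhibits it as an extension of a subset of the finite set $\HH^1(\OO_{K,S},H(\lambda))$ — finite by the Hermite–Minkowski finiteness for flat cohomology of finite flat group schemes over rings of $S$-integers — by a quotient of the finite group $\Pic(\OO_{K,S})$. This "finiteness for torsors under an abelian scheme'' is carried out carefully in \cite[\S6]{JLM}.

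\textbf{Step 3: bounding $\mathrm{V}(s)$ via Lawrence–Sawin.} It remains, for each of the finitely many $(A,P,L)$ found above, to bound the set of $s$ up to scaling whose zero locus $\mathrm{V}(s)$ is smooth over $\OO_{K,S}$ — equivalently, the subset of $\PP(\HH^0(P_K,L_K))(K)=\PP^{d-1}(K)$ consisting of classes of sections with $\OO_{K,S}$-smooth zero locus. Here the hypothesis $g\geq4$ enters. Since $P\to\Spec\OO_{K,S}$ is proper, a closed point of $P_K$ spreads out to an $\OO_{L,T}$-point of $P$ for a suitable finite extension $L/K$ and finite set of places $T$ of $L$ (containing those above $S$), so that $(P,L)_{\OO_{L,T}}\cong(A,L')_{\OO_{L,T}}$ for a fixed ample line bundle $L'$ of degree $d$ on $A_{\OO_{L,T}}$. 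Lawrence–Sawin's theorem \cite{lawrence2020shafarevich} then furnishes only finitely many $\OO_{L,T}$-smooth hypersurfaces in $A_{\OO_{L,T}}$ representing $L'$. Because an $\OO_{K,S}$-smooth hypersurface in $P$ is the scheme-theoretic closure of its generic fibre, and a closed subscheme of $P_K$ is determined by its base change to $L$, the resulting map from our set to this finite set is injective; hence our set is finite, and combining with Steps 1 and 2 proves the theorem.

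\textbf{The main obstacle.} The deep ingredient is Step 3, namely Lawrence–Sawin's Hodge-theoretic finiteness theorem — a Lawrence–Venkatesh-style argument via $p$-adic period maps and a big-monodromy computation for the relevant family of hypersurfaces — which is exactly where $g\geq4$ is used and which I would simply invoke. The genuinely new work lies in the organisation of Steps 1–3, in particular in isolating the para-abelian space as an intermediate layer (Step 2) and in verifying that trivialising it over a finite extension costs nothing (Step 3); the complete argument appears as \cite[Theorem~6.5]{JLM}.
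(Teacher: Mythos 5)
Your argument follows exactly the route the paper takes: the theorem is stated there with a pointer to \cite[Theorem~6.5]{JLM} and is described as being "obtained by combining the results of \cite{Faltings2, lawrence2020shafarevich} with finiteness results for torsors under an abelian scheme," i.e.\ Faltings for $(A,\lambda)$, the theta-group/gerbe structure of $\mathcal{T}_{g,d}\to\mathcal{A}_{g,d}$ for the pairs $(P,L)$, and Lawrence--Sawin (after trivialising the torsor over a finite extension) for the hypersurfaces. Your three steps are a correct fleshing-out of that same decomposition.
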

	
	Roughly speaking, Theorem \ref{thm:sawinhyp} says that our desired finiteness result holds over number rings, even when we vary the ambient abelian variety. Now, to prove Theorem \ref{thm1}, it suffices to show that, for every  arithmetic ring  $R$, the set $\pi_0(\mathcal{AH}_{g,d}^{\text{sm}}(R))$ is finite. In fact, this is   \emph{stronger} than the conclusion of Theorem \ref{thm1}.

\begin{theorem}\label{thm:main}	If $d \geq 1$,  $g \geq 4$,    and $R$ is an arithmetic ring, then  $\pi_0(\mathcal{AH}_{g,d}^{\text{sm}}(R))$ is finite.
\end{theorem}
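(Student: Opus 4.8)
The plan is to upgrade the finiteness over number rings provided by Theorem \ref{thm:sawinhyp} to finiteness over all arithmetic rings, i.e.\ to establish the \emph{Persistence Conjecture} for $\mathcal{AH}_{g,d}^{\text{sm}}$, by propagating it through the uniformisation of Proposition \ref{prop:uniff} and the quasi-finite forgetful map to $\mathcal{CP}$.

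First I would reduce to the case where $R$ is a $\mathbb{Q}$-algebra. For any arithmetic ring $R$ the ring $R\otimes_{\mathbb{Z}}\mathbb{Q}$ is again an arithmetic ring (a localisation of $R$, hence normal, and finitely generated over $\mathbb{Q}$), and $\pi_0(\mathcal{AH}_{g,d}^{\text{sm}}(R))$ injects into $\pi_0(\mathcal{AH}_{g,d}^{\text{sm}}(R\otimes_{\mathbb{Z}}\mathbb{Q}))$: if two objects of $\mathcal{AH}_{g,d}^{\text{sm}}(R)$ become isomorphic over $R\otimes_{\mathbb{Z}}\mathbb{Q}$ they already become isomorphic over $R[1/n]$ for some $n\ge 1$ (as $\underline{\text{Isom}}_R(-,-)$ is of finite type over $R$), and such an isomorphism over the dense open $\Spec R[1/n]\subseteq\Spec R$ extends over $\Spec R$ because $\mathcal{AH}_{g,d}^{\text{sm}}$ has finite diagonal (Proposition \ref{prop:basic_properties}) and $R$ is normal — the schematic closure of the corresponding section of the finite scheme $\underline{\text{Isom}}_R(-,-)\to\Spec R$ is finite and birational over a normal base, hence an isomorphism. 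From now on I assume $R$ is a $\mathbb{Q}$-algebra; in particular $2d\in R^{\times}$.

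Next I would pass to the uniformisation $\pi\colon U\to\mathcal{AH}_{g,d,\mathbb{Z}[1/2d]}^{\text{sm}}$, a finite \'etale surjection with $U$ an algebraic space of finite type over $\mathbb{Z}[1/2d]$, supplied by Proposition \ref{prop:uniff}. Pulling $\pi$ back along an $\mathcal{O}_{K,S}$-point of $\mathcal{AH}_{g,d,\mathbb{Z}[1/2d]}^{\text{sm}}$ gives a finite \'etale $\Spec\mathcal{O}_{K,S}$-scheme, which has only finitely many sections; so Theorem \ref{thm:sawinhyp} shows that $U(\mathcal{O}_{K,S})$ is finite for every number ring over $\mathbb{Z}[1/2d]$, i.e.\ $U$ is arithmetically hyperbolic over $\overline{\mathbb{Q}}$. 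On the other hand, base changing to $\mathbb{Q}$ and composing with the quasi-finite forgetful map $\mathcal{AH}_{g,d,\mathbb{Q}}^{\text{sm}}\to\mathcal{CP}$ of Corollary \ref{cor:qf} yields a quasi-finite morphism $U_{\mathbb{Q}}\to\mathcal{CP}$. By \cite[Theorem~1.4]{JSZ} such a $U$ satisfies the Persistence Conjecture, so that $U(R')$ is finite for \emph{every} arithmetic ring $R'$ — in particular for every finite \'etale extension of $R$.

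To conclude, I would descend along $\pi_{\mathbb{Q}}\colon U_{\mathbb{Q}}\to\mathcal{AH}_{g,d,\mathbb{Q}}^{\text{sm}}$ using the stacky Chevalley--Weil theorem of \cite{JLalg}: since $\pi_{\mathbb{Q}}$ is finite \'etale and $\mathcal{AH}_{g,d,\mathbb{Q}}^{\text{sm}}$ has finite diagonal, there is a finite list of finite \'etale extensions $R\to R_1,\dots,R\to R_m$ — finitely many by a Hermite--Minkowski-type finiteness for arithmetic rings, since their degrees are bounded by $\deg\pi$ — such that every object of $\mathcal{AH}_{g,d,\mathbb{Q}}^{\text{sm}}(R)$ lifts, after base change to some $R_j$, to a point of $U(R_j)$, and the resulting map $\pi_0(\mathcal{AH}_{g,d,\mathbb{Q}}^{\text{sm}}(R))\to\bigsqcup_{j}\pi_0(U(R_j))$ has finite fibres (controlled via Galois cohomology by the finite groups $\Gal(R_j/R)$ and the finite automorphism groups of $\mathcal{AH}_{g,d}^{\text{sm}}$). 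Since each $U(R_j)$ is finite by the previous paragraph, $\pi_0(\mathcal{AH}_{g,d}^{\text{sm}}(R))$ is finite; undoing the reduction of the second paragraph finishes the proof.

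The genuinely hard work lies in the ingredients already established: the uniformisability of $\mathcal{AH}_{g,d}^{\text{sm}}$ over $\mathbb{Z}[1/2d]$ (Proposition \ref{prop:uniff}, built from level structures \`a la Mumford and Olsson) and the quasi-finiteness of $\mathcal{AH}_{g,d,\mathbb{Q}}^{\text{sm}}\to\mathcal{CP}$ (Corollary \ref{cor:qf}, valid already for $g\ge 2$; the bound $g\ge 4$ enters only via the input Theorem \ref{thm:sawinhyp}). The remaining subtlety is bookkeeping: verifying that the number-ring finiteness of Theorem \ref{thm:sawinhyp} genuinely transfers to the atlas $U$ and its finite \'etale extensions — so that the hypotheses of \cite[Theorem~1.4]{JSZ} and of the Chevalley--Weil descent are met — and that the descent map has finite fibres. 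Crucially, nothing here requires the abelian scheme $A$ to live over a number field, which is precisely why the argument routes through the $\mathbb{Q}$-stack $\mathcal{CP}$, whose geometry is insensitive to enlarging the base field.
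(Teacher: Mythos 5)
Your overall architecture --- uniformise $\mathcal{AH}_{g,d}^{\text{sm}}$, transfer the number-ring finiteness of Theorem \ref{thm:sawinhyp} to the atlas $U$, invoke \cite[Theorem~1.4]{JSZ} via the quasi-finite map to $\mathcal{CP}$, and descend by a stacky Chevalley--Weil argument --- is exactly the paper's. (The paper phrases the last two steps in the language of arithmetic hyperbolicity over algebraically closed fields and finishes with \cite[Theorem~4.23]{JLalg} rather than running Chevalley--Weil directly over $R$; that is a repackaging, not a different idea.)

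However, your opening reduction is genuinely wrong. The ring $R\otimes_{\mathbb{Z}}\mathbb{Q}$ is \emph{not} an arithmetic ring in the sense of this paper: it contains $\mathbb{Q}$ and so is never $\mathbb{Z}$-finitely generated. This is not a pedantic point about definitions --- the statement you reduce to is false. For $R=\mathbb{Z}$ one has $R\otimes_{\mathbb{Z}}\mathbb{Q}=\mathbb{Q}$, and $\pi_0(\mathcal{AH}_{g,d}^{\text{sm}}(\mathbb{Q}))$ is infinite: a fixed $(A,L)$ over $\mathbb{Q}$ has infinitely many smooth members of $|L|$ defined over $\mathbb{Q}$, while the automorphism group of $(A,A,L)$ in $\mathcal{T}_{g,d}$ is an extension of a finite group by $\mathbb{G}_m$ and hence acts on $\mathbb{P}(\mathrm{H}^0(A,L))(\mathbb{Q})$ with finite orbits. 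The error then propagates through the rest of your argument: once $R$ is a $\mathbb{Q}$-algebra, its finite \'etale extensions $R'$ are also $\mathbb{Q}$-algebras, hence not $\mathbb{Z}$-finitely generated subrings of $k=\overline{\mathrm{Frac}(R)}$, so the arithmetic hyperbolicity of $U_k$ supplied by the Persistence Conjecture says nothing about $U(R')$ --- finiteness of integral points does not imply finiteness of rational points. The fix is easy and preserves the rest of your proof: invert only $2d$. The ring $R[1/2d]$ \emph{is} an arithmetic ring, $2d$ is a unit in it so the uniformisation of Proposition \ref{prop:uniff} is available after base change, and your normality/finite-diagonal argument (take the schematic closure of the section of the finite Isom-scheme over the dense open $\Spec R[1/2d]\subseteq\Spec R$) gives the injection $\pi_0(\mathcal{AH}_{g,d}^{\text{sm}}(R))\hookrightarrow\pi_0(\mathcal{AH}_{g,d}^{\text{sm}}(R[1/2d]))$. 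With that replacement the finite \'etale extensions appearing in your Chevalley--Weil step are again arithmetic rings and the argument closes; alternatively, do as the paper does and work only with the uniformisation over $\mathbb{Q}$, proving arithmetic hyperbolicity of $\mathcal{AH}_{g,d,k}^{\text{sm}}$ over $k=\overline{\mathrm{Frac}(R)}$ and letting \cite[Theorem~4.23]{JLalg} handle the passage back to $R$-points.
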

\begin{proof}
Let $$U\to  \mathcal{AH}_{g,d, \mathbb{Q}}^{\text{sm}}$$ be a finite \'etale surjective  morphism with $U$ a finite type separated algebraic space over $\mathbb{Q}$; such an algebraic space exists by  Proposition \ref{prop:uniff}. Note that $U$ is a quasi-projective scheme as the induced morphism $U\to \mathcal{CP}$ is quasi-finite and separated (Corollary \ref{cor:qf}) and the coarse space of $\mathcal{CP}_h$ is quasi-projective for every $h\in \mathbb{Q}[t]$ by Viehweg's theorem  \cite[Theorem~3]{Viehweg06}.
 
By Theorem \ref{thm:sawinhyp}, the stack $\mathcal{AH}_{g,d, \Qbar}^{\text{sm}}$ is arithmetically hyperbolic over $\Qbar$ (as defined in \cite[Definition~4.1]{JLalg}). Since $$U_{\Qbar}\to \mathcal{AH}_{g,d, \Qbar}^{\text{sm}}$$ is quasi-finite, it follows that $U_{\Qbar}$ is arithmetically hyperbolic over $\Qbar$ (see \cite[Proposition~4.17]{JLalg}).  Since the composed morphism \[U_{\Qbar} \to  \mathcal{AH}_{g,d, \Qbar}^{\text{sm}} \to \mathcal{CP}\] is quasi-finite  (Corollary \ref{cor:qf}), we have that $U_{\Qbar}$ satisfies the Persistence Conjecture \cite[Theorem~1.4]{JSZ}, so that, for every algebraically closed field $k$ of characteristic zero, the variety $U_k$ is arithmetically hyperbolic over $k$. The stacky Chevalley-Weil theorem \cite[Theorem~5.1]{JLalg} allows us to conclude that $  \mathcal{AH}_{g,d, k}^{\text{sm}}$ is arithmetically hyperbolic over $k$ for every such field.
 
  Let $R$ be a $\mathbb{Z}$-finitely generated normal integral domain of characteristic zero, and define $k:=\overline{\mathrm{Frac}(R)}$. Since  $\mathcal{AH}_{g,d}^{\text{sm}}$ has finite diagonal (Proposition \ref{prop:basic_properties}) and $\mathcal{AH}_{g,d,k}^{\text{sm}}$  is arithmetically hyperbolic over $k$, by applying  \cite[Theorem~4.23]{JLalg}, we obtain that  the set  $\pi_0(\mathcal{AH}_{g,d}^{\text{sm}}(R))$ is finite, as required.  
\end{proof}

\begin{proof}[Proof of Theorem \ref{thm1}] 
Let $A$ be an abelian scheme over $R$ of dimension $g$ and  let $\mathcal{L} \in \Pic(A)$  be an ample line bundle  of degree $d$. Define $S$ to be the set of smooth hypersurfaces $H \subset A$ with $\mathcal{O}_A(H) \cong \mathcal{L}$.  
Note that  there is a map $S \to \pi_0(\mathcal{AH}_{g,d}^{\text{sm}}(R))$ which sends a hypersurface $H \subset A$ to the isomorphism class of the tuple \[(A,A,\mathcal{O}_A(H), s_H: \mathcal{O}_A \to \mathcal{O}_A(H)).\] The   set $\pi_0(\mathcal{AH}_{g,d}^{\text{sm}}(R))$ is finite by the arithmetic hyperbolicity of $\mathcal{AH}_{g,d}^{\text{sm}}$ over the algebraic closure of $\text{Frac}(R)$ (see Theorem \ref{thm:main}).  Thus, to prove the desired finiteness of $S$, it suffices to show that the above map has finite fibers. However, the fiber of the map $S\to\pi_0(\mathcal{AH}_{g,d}^{\text{sm}}(R))$ over a given $(A,A,\mathcal{L},s)$  consists of those hypersurfaces $H' \in S$ which appear as the images of $H$ under an $R$-isomorphism  $f: A \to A$ of schemes which preserves $\mathcal{L}$. To see this set is finite, we invoke  the  following well-known finiteness  statement:     if $X$ is a smooth projective variety of Kodaira dimension at least zero (e.g., $X$ is an abelian variety) and $E$ is an ample line bundle on $X$, then the group of automorphisms of $X$ fixing $\mathcal{O}(E)$ is finite. (This finiteness  is proven as follows.  Let $G$ be the group scheme of automorphisms of $X$ fixing $\mathcal{O}(E)$.  To prove that $G$ is finite,    we may and do assume that $E$ is very ample. Then, any automorphism $\sigma:X\to X$ with $\sigma^\ast \mathcal{O}(E) \cong \mathcal{O}(E)$ extends to an automorphism of $\mathbb{P}(\mathrm{H}^0(X,\mathcal{O}(E)))$. Therefore, $G$ is an affine finite type group scheme. Since the Kodaira dimension of $X$ is nonnegative, by  Matsusaka-Mumford's theorem \cite[Theorem~2]{MatMum}, the group scheme $G$ is proper, hence finite.) This concludes the proof.
\end{proof}

  \bibliography{refsci}{}
\bibliographystyle{alpha}

	\end{document}